\newtheorem{theorem}[equation]{Theorem}      
\newtheorem{lemma}[equation]{Lemma}          %
\newtheorem{corollary}[equation]{Corollary}  
\newtheorem{proposition}[equation]{Proposition}
\theoremstyle{definition}
\newtheorem{definition}[equation]{Definition}
\theoremstyle{definition}
\theoremstyle{remark}
\theoremstyle{definition}
\newtheorem{remark}[equation]{Remark}
\numberwithin{equation}{section} 
\DeclareMathOperator{\Sym}{Sym}
\newcommand{\GG}{\mathcal{G}}
\newcommand{\NN}{\mathbb{N}}
\newcommand{\ZZ}{\mathbb{Z}}
\newcommand{\Evec}{\mathcal{E}}
\renewcommand{\ss}[1]{s_{#1}}
\newcommand{\scar}{s_{Car}}
\newcommand{\carperiod}{\tilde{\pi}}
\newcommand{\CCc}{\mathbb{C}_\infty}
\renewcommand{\AA}{A}
\renewcommand{\over}[1]{\chi_t(#1)}
\newcommand{\Tate}{\mathbb{T}}
\newcommand{\del}{\triangle}
\newcommand{\EE}{\mathbb{E}}
\newcommand{\Fq}{\mathbb{F}_q}
\newcommand{\frob}{\mathtt{Frob}}
\DeclareMathOperator{\GL}{\mathbf{GL}}
\newcommand{\FF}{\mathcal{F}}
\newcommand{\RadR}{\mathcal{R}}
\newcommand{\DVMF}[5]{\mathcal{M}_{#1, #2}^{#3} (#4, #5)}
\newcommand{\DMF}[2]{M_{#1, #2}}
\newcommand{\tauField}{\mathcal{K}}
\newcommand{\calG}{\mathcal{G}}
\newcommand{\calH}{\mathcal{H}}
\newcommand{\Car}{\text{Car}}
\begin{document}

\title[$\tau$-recurrent sequences and Eisenstein series]{On symmetric powers of $\tau$-recurrent sequences and deformations of Eisenstein series}


\author{Ahmad El-Guindy}
\address{Ahmad El-Guindy, Department of Mathematics, Faculty of Science \\
Cairo University, Giza 12613, Egypt}
\curraddr{Ahmad El-Guindy, Texas A\&M University at Qatar, Science Program \\
Doha~23874, Qatar.}
\email{a.elguindy@gmail.com}
\thanks{}

\author{Aleksandar Petrov}
\address{}
\curraddr{Aleksandar Petrov, Texas A\&M University at Qatar, Science Program\\
 Doha~23874, Qatar.}
\email{aleksandar.petrov@qatar.tamu.edu}
\thanks{}

\subjclass[2010]{Primary 11F52, 11G09, 11M38}

\keywords{vectorial Drinfeld modular forms, $\tau$-recurrent sequences, deformations of Eisenstein series, $A$-expansions}%

\date{}

\dedicatory{}

\commby{Matthew A. Papanikolas}

\begin{abstract}
We prove the equality of several $\tau$-recurrent sequences, which were first considered by Pellarin, and which have close connections to Drinfeld vectorial modular forms. Our result has several consequences: an $A$-expansion for the $l^\text{th}$~power ($1 \leq l \leq q$) of the deformation of the weight $2$ Eisenstein series; relations between Drinfeld modular forms with $A$-expansions; a new proof of relations between special values of Pellarin $L$-series. 
\end{abstract}

\maketitle

\tableofcontents

\section{Introduction and Statement of Results} \label{first}

Let $q=p^e$, with $p$ a prime number and $e$ a positive integer. Let $A$ be the polynomial ring $\Fq[\theta]$, $K$ its fraction field and $A_+$ the set of monic polynomials in~$A$. Let $| \cdot |$ be the absolute value on~$K$ uniquely defined by $|a| = q^{\deg_\theta (a)}$ for $a \in A$, and let $K_\infty$ be the completion of~$K$ with respect to $| \cdot |$, $\CCc$ be the completion of a fixed algebraic closure of~$K_\infty$. Let $B_r\subset \CCc$ be the open disc of radius $r$ centered at $0$. The \emph{Drinfeld upper half-plane} $\Omega$ is the set $\CCc \backslash K_\infty$ together with its rigid analytic structure as in \cite[\S~1.6]{Gos_mod_forms}. The group~$\Gamma := \GL_2 (A)$ acts on $\Omega$ by fractional linear transformations. 
Let $\phi_{\Car}$ be the  \emph{Carlitz module} defined by $\phi_\Car (\theta) = \theta \tau^0 + \tau$ with $\tau$ the $q$th power Frobenius operator on $\CCc$. We fix $\carperiod \in \CCc$, so that the lattice corresponding to the Carlitz module is $\carperiod A$. The exponential function of $\carperiod A$ will be called \emph{the Carlitz exponential} and will be denoted by $e_{\carperiod A}$.

Let $t$ be a new variable independent of $\theta$ and consider the series
\[
	\scar (t) := \sum_{n = 0}^\infty e_{\carperiod A} \left ( \frac{\carperiod}{\theta^{n+1}} \right) t^n.
\]
The series converges for $|t| < q$ (see \cite[Proposition~2.3]{BoPe}). 

The set of isomorphism classes of rank $2$ Drinfeld $A$-modules over $\CCc$ corresponds to $\Gamma  \backslash \Omega$ via the well-known equivalence of categories between Drinfeld modules and lattices (\cite[2.4]{Tha}). For $z \in \Omega$, let $\Lambda_z := z A \oplus A$. The exponential function for $\Lambda_z$ will be written as
\[
	e_{\Lambda_z} (\zeta) = \sum_{n = 0}^\infty \alpha_n (z) \zeta^{q^n},
\]
where $\alpha_n : \Omega \to \CCc$ are functions given explicitly in \cite[Theorem~3.1]{EP}. Following Pellarin \cite{Pel_order_of_vanishing} we consider 
\[
	\ss{1} (z, t) := \sum_{n = 0}^\infty e_{\Lambda_z} \left ( \frac{z}{\theta^{n+1}} \right) t^n, \qquad \qquad \ss{2} (z, t) := \sum_{n = 0}^\infty e_{\Lambda_z} \left (\frac{1}{\theta^{n+1}} \right) t^n.
\]
Both $s_1, s_2$ converge for $(z, t) \in \Omega \times B_q$. For~arithmetic consideration it is more convenient to work with normalizations of $\ss{1}$ and $\ss{2}$, namely
\[
	d_1 (z, t) := \carperiod \scar^{-1}(t) \ss{1}(z, t) ; \qquad \qquad d_2 (z, t) := \carperiod \scar^{-1}(t) \ss{2}(z, t) .
\]
The functions $d_1, d_2$ converge for any $z, t \in \CCc$  (see \cite[Proposition~19]{Pel_tau_recur_seq}).

Let $\chi_t : A \to \Fq[t]$ be the ring homomorphism defined by $\chi_t (a) = a(t)$. If $\alpha, \beta$ are positive integers, then the \emph{Pellarin $L$-function} is defined by 
\begin{equation} \label{L_definition}
	L(\chi_t^\alpha, \beta) := \sum_{a \in A_+} \chi_t(a)^\alpha a^{-\beta}.
\end{equation}

Pellarin introduced $L(\chi_t^\alpha, \beta)$ in \cite{Pel_L_series} as a deformation of the Carlitz zeta function and more general Goss $L$-functions (see \cite[Chapter~8]{Gos_book}).  In addition to Pellarin's original paper, the reader can find information about analytic continuation of Pellarin's $L$-function in \cite{Gos_Pel_L} (note that we will only consider values of $L(\chi_t^\alpha, \beta)$ for positive integers $\alpha, \beta$, i.e., values as in Equation \eqref{L_definition}), and formulas for special values in \cite{Perk}. In the course of the proof of our main result we will give a new proof of several relations between special values of Pellarin $L$-functions (see Corollary~\ref{Lvals}).

Let $\tau : \CCc ((t)) \to \CCc ((t))$ be the field automorphism that fixes $t$ and acts as  the Frobenius $q$th power operator on elements of $\CCc$. This agrees with the previous definition of $\tau$ on $\CCc$, so by abuse of notation we will use $\tau$ to denote both. If $f \in \CCc((t))$, then we will use the notation $f^{(i)}$ for $\tau^i f$. 

For $l \in \NN$ we define the sequence $\{\GG_{l, k} \}_{k \in \ZZ}$ by
\[
\GG_{l,k}:= \GG_{l, k} (z, t) = \frac{1}{L(\chi_t^l, l q^k)} \sideset{}{'} \sum_{c, d \in A} \left( \frac{ \over{c} d_1 + \over{d} d_2}{(c z + d)^{q^k}} \right)^l.
\]
The primed sum $\sideset{}{'} \sum$ will be used throughout to denote a sum with the term  where all summation indices are zero is omitted. We will prove (Proposition~\ref{convergence_of_E}) that if $k \geq 0$ the series defining $\GG_{l, k}$ is well-defined for all $(z, t) \in \Omega \times B_{q^{q^k}}$. 

Pellarin explicitly computed $\{\GG_{1, k} \}_{k \geq 0}$ in \cite[Theorem~4]{Pel_L_series}:
\begin{equation} \label{G_formula}
	\GG_{1, k} = -h^{q^k} (t - \theta^{q^k}) \scar^{(k)} \left ( d_2^{(k+1)} d_1 - d_1^{(k+1)} d_2 \right),
\end{equation}
where $h$ is the Drinfeld modular form of weight $q+1$ and type $1$, which is defined by Equation \eqref{define_h} below. We give a different formula for $\GG_{1, k}$ in \eqref{G_formula_two}.

The main result of the current paper is the computation of the sequence $\{\GG_{l, k} \}_{k \geq 0}$ for $l$ in the range $1 \leq l \leq q$:

\begin{theorem} \label{mainthm} Let $1 \leq l \leq q$ be fixed. For $k \geq 0$, we have
\[
	\GG_{l, k} = (-1)^{l+1} \GG_{1, k}^l.
\]
\end{theorem}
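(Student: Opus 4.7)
My plan is to substitute Pellarin's formula \eqref{G_formula} into the right-hand side, turning the target identity into the fully explicit equation
\begin{equation*}
\GG_{l,k} \;=\; -\,h^{l q^k}(t-\theta^{q^k})^l\,\scar^{(k)\,l}\bigl(d_2^{(k+1)} d_1 - d_1^{(k+1)} d_2\bigr)^l,
\end{equation*}
and then to verify it by induction on $k$, treating the base case $k=0$ by a direct computation.

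For the base case, the key input is the transformation law $\chi_t(c) d_1(z,t) + \chi_t(d) d_2(z,t) = (cz+d)\,d_2(\gamma z, t)$, valid whenever $(c,d)$ is a primitive pair and $\gamma \in \Gamma$ is any lift to a matrix with bottom row $(c,d)$ and determinant $1$; this expresses that $\binom{d_1}{d_2}$ is, up to normalization by $\carperiod \scar^{-1}$, a weight $-1$ vectorial modular form with $\chi_t$-multiplier. Decomposing each summand via $(c,d) = \delta(c',d')$ with $(c',d')$ primitive, the sum over $\delta \in A \setminus \{0\}$ produces the factor $L(\chi_t^l, l)$ (after absorbing the $\Fq^\times$ redundancy), cancelling the prefactor and leaving a Poincar\'e-type sum $\sum d_2(\gamma z, t)^l$ over orbits of primitive pairs. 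The task reduces to showing this series equals the $l$-th power of the corresponding $l=1$ sum (the latter computed by Pellarin), i.e.\ that all multinomial cross terms vanish. This collapse is the heart of the theorem: for $l = q$ it is immediate from the Frobenius identity $(X+Y)^q = X^q + Y^q$ in characteristic $p$, while for $1 < l < q$ it should rely on the special Pellarin $L$-value identities recorded in Corollary~\ref{Lvals}, together with the $A$-expansion structure of the deformed weight-$l$ Eisenstein series.

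For the induction step, I would exploit the $\tau$-equivariance of the construction: $\chi_t(c), \chi_t(d) \in \Fq[t]$ are $\tau$-invariant, $(cz+d)^{l q^{k+1}} = \tau\bigl((cz+d)^{l q^k}\bigr)$, and $\binom{d_1^{(1)}}{d_2^{(1)}}$ is related to $\binom{d_1}{d_2}$ via the Frobenius equation governing the rank-$2$ Drinfeld $A$-module attached to $\Lambda_z$. Combined, these yield a recurrence expressing $\GG_{l,k+1}$ as an explicit operator applied to $\GG_{l,k}$ that matches the analogous recurrence visible directly on the closed-form right-hand side. Consequently the main obstacle is the base case, and the same cancellation phenomenon is likely what forces the range restriction $l \leq q$.
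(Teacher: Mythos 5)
There is a genuine gap at exactly the point you yourself flag as ``the heart of the theorem'': the vanishing of the multinomial cross terms for $1<l<q$ is asserted, not proved, and the two inputs you propose cannot carry that weight. Corollary~\ref{Lvals} is, in this paper, a \emph{consequence} of the cancellation mechanism (it is the special case of the key lemma with $V_i=t^{i-1}$, $W_i=\theta^{i-1}$), so invoking it to establish the cancellation is circular; and ``the $A$-expansion structure of the deformed Eisenstein series'' is not an argument. The engine in the paper is purely algebraic and uniform in $k$: writing $c,d$ out in their $\Fq$-coordinates, Lemma~\ref{lemma3} shows
\[
\sideset{}{'}\sum_{u_1,\dots,u_n\in\Fq}\Bigl(\tfrac{u_1V_1+\cdots+u_nV_n}{u_1W_1+\cdots+u_nW_n}\Bigr)^{l}=(-1)^{l+1}\Bigl(\sideset{}{'}\sum_{u_1,\dots,u_n\in\Fq}\tfrac{u_1V_1+\cdots+u_nV_n}{u_1W_1+\cdots+u_nW_n}\Bigr)^{l}
\]
by induction on $n$, peeling off one $\Fq$-variable at a time via the one-variable identity $1+\sum_{u\in\Fq}\bigl(\tfrac{X+u}{Y+u}\bigr)^{l}=\bigl(\tfrac{Y^q-X}{Y^q-Y}\bigr)^{l}$ (Lemma~\ref{lemma2}); that identity is where the restriction $l\le q$ enters, through the vanishing of $\binom{l(q-1)}{i(q-1)}$ modulo $p$ by Lucas' theorem. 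Applying this with the $W_i$ equal to the ($q^k$-th powers of the) monomials $z\theta^{i-1},\theta^{i-1}$ and the $V_i$ the corresponding $d_1t^{i-1},d_2t^{i-1}$ gives the theorem for every $k\ge 0$ at once, with $L(\chi_t^l,l)=L(\chi_t,1)^l$ falling out as a byproduct rather than serving as an input. You need to supply this (or an equivalent) cancellation argument; only the case $l=q$ is genuinely immediate from Frobenius, as you say.

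Separately, your induction on $k$ is underpowered as stated. The sequence $\{\GG_{l,k}\}_k$ does not satisfy a first-order recurrence in $k$: applying $\tau$ to the summands produces $d_1^{(1)},d_2^{(1)}$ in the numerators, and eliminating these via the second-order $\tau$-difference equation \eqref{tau_eq_ds} leads to the order-$(l+1)$ operator $L_l$ (compare $L_1$ and $L_2$ in Section~2). Since both sides of the claimed identity lie in the $(l+1)$-dimensional space $V(L_l)$, matching them along these lines would require verifying $l+1$ consecutive initial values $k=0,\dots,l$, not just $k=0$; and each of those base cases again needs the cross-term cancellation, so this route does not avoid the main gap even in principle.
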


Prasenjit Bhowmik (work in preparation) has obtained similar results by a different method. Indeed, he computes Rankin brackets of certain families of Drinfeld modular forms and then applies a density argument (\cite{Pel_personal_communication}).

The paper is organized as follows. In Section~2 we introduce the necessary background, in particular, vectorial Drinfeld modular forms, their deformations and $\tau$-recurrent sequences. Subsection~2.3 gives a new method for the computation of the coefficients of $d_2$ based on the theory of shadowed partitions. In Section~3 we prove Theorem~\ref{mainthm}. Finally, Section~4 gives applications of Theorem~\ref{mainthm} to deformations of Drinfeld modular forms. In particular, Theorem~\ref{E^2} gives an $A$-expansion for the $l$th power of the deformation of Gekeler's `false Eisenstein series' (see \eqref{EE_definition}), $\EE^l$, $1 \leq l \leq q$, extending the one given by Pellarin for $l = 1$, while Corollary~\ref{application_to_A_exp} gives examples of Drinfeld modular forms that are eigenforms and can be expressed as products of eigenforms.

{\bf Acknowledgements} 
We are very grateful to Federico Pellarin for his encouragement and help, as well as for his many contributions to the field, including \cite{Pel_tau_recur_seq} which was the main inspiration for this work.
We would also like to thank David Goss,  Matthew Papanikolas, Rudolph Perkins, Dinesh Thakur and the anonymous referee for their feedback and support during the writing of the present article.

\section{Deformations of Vectorial Modular Forms}

\subsection{Drinfeld Modular Forms and Their Generalizations}

For $z \in \Omega$ and 
$\gamma = \begin{bmatrix} a & b \\
c & d \end{bmatrix} \in \Gamma$, we shall write $J_\gamma := c z + d$ and $L_\gamma := c/(cz + d)$.

The `imaginary distance' $|z|_i$ of $z \in \Omega$ is defined by $|z|_i:= \text{inf}_{x \in K_\infty} |z - x|$. Let $u := u(z) = 1/ e_{\carperiod A} (\carperiod z)$ be the normalized uniformizer at `infinity'.  We shall say that a rigid-analytic function $f : \Omega \to \CCc$ has a $u$-expansion, if there exists $\delta_f > 0$, such that for $z \in \Omega$ with $|z|_i > \delta_f$ we have \[
 	f (z) = \sum_{n = n_0}^\infty a_n u^n,
\]
for some $n_0 \in \ZZ$, $a_n \in \CCc$. Since $u \in \CCc$, $\tau$ acts on $u$ as a $q$th power Frobenius. The function $f$ is said to have an integral $u$-expansion if $n_0 \in \ZZ_{\geq 0}$. A~rigid-analytic function $f$ which satisfies $f(z + a) = f(z)$ for all $a \in A$ has a $u$-expansion and this $u$-expansion determines $f$ uniquely. 

For $c \in A_+$, set $u_c := u(cz)$. We have (see \cite[(6.2)]{Gek_88_paper} for instance)
\begin{equation} \label{uc}
	u_c = u^{q^{\deg_\theta(c)}} + \text{higher order terms in $u$}.
\end{equation}

\begin{definition}
A \emph{Drinfeld modular form} of weight $w$, type $m$ for $\Gamma$ is a rigid-analytic function $f : \Omega \to \CCc$ such that
\[
	f (\gamma (z)) = J_\gamma^w \det(\gamma)^{-m} f(z),
\]
and such that $f$ has an integral $u$-expansion. The set of Drinfeld modular forms of weight $w$ and type $m$ is a finite-dimensional $\CCc$-vector space, which we denote by~$\DMF{w}{m}$.
\end{definition}

Among the most important examples of Drinfeld modular forms are 
\begin{equation} \label{define_g}
	g  := 1 - (\theta^q - \theta) \sum_{c \in A_+} u_c^{q-1} = 1 - (\theta^q - \theta) u^{q-1} + \cdots \in \DMF{q-1}{0}, 
\end{equation}
\begin{equation} \label{define_h}
	h  := \sum_{c \in A_+} c^q u_c = u + \cdots \in \DMF{q+1}{1},  \qquad \del  := -h^{q-1} \in \DMF{q^2 -1}{0}.
\end{equation}
The forms $g$ and $h$ generate the space of Drinfeld modular forms of any weight and type. An important rigid-analytic function, which is not a Drinfeld modular form, but is closely connected with the theory, is Gekeler's `false Eisenstein series':
\[
	E  := \sum_{c \in A_+} c u_c.
\]
The reader can find more about the properties of $g, h, \del$ and $E$ in one of the standard references \cite{Gek_88_paper}, \cite{Gos_eisenstein}, \cite{Gos_mod_forms}.

Bosser and Pellarin \cite{BoPe} introduced the concept of \emph{almost-$A$-quasi-modular forms}, which encompasses Drinfeld modular forms as well as $d_2$ and the function 
\begin{equation} \label{EE_definition}
	\EE := - h d_2^{(1)}.
\end{equation} We do not recall the general definition of almost-$A$-quasi-modular forms (\cite[Definition~2.9]{BoPe}) here. 

 The function $d_1$ does not fall into the framework of almost-$A$-quasi-modular forms, but the following facts show that it has to be studied together with $d_2$ when considering modular properties.

\begin{proposition} \label{properties_of_d1_d2} 
\
\begin{enumerate}  \item The functions $d_1$, $d_2$ satisfy 
 \begin{equation} \label{mod_properties_d1_d2} J_\gamma d_1 (\gamma(z)) = \over{a} d_1  + \over{b} d_2, \qquad \qquad J_\gamma d_2 (\gamma (z)) = \over{c} d_1  + \over{d} d_2 . \end{equation} 
\item The function $d_2$ has a $u$-expansion with $\Fq[\theta, t]$ coefficients:
\begin{equation} \label{d2_expansion}
d_2 = 1 + (\theta - t) u^{q-1} + ( \theta - t) u^{(q-1)(q^2 - q + 1)} + \cdots,
\end{equation}
 while $d_1$ does not have a $u$-expansion.
 \item The functions $d_1$ and $d_2$ form a basis for the solution space of the $\tau$-difference equation
 \begin{equation} \label{tau_eq_ds}
 	X^{(2)} = \frac{1}{\del (t-\theta^q)} \left ( X - g X^{(1)} \right).
\end{equation}
\end{enumerate}
\end{proposition}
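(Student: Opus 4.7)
The proof splits into three essentially independent arguments.

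For (1), the lattice identity $J_\gamma \Lambda_{\gamma(z)} = \Lambda_z$ (immediate from $\det\gamma \in \Fq^\times$) lifts by homogeneity of the exponential to $J_\gamma e_{\Lambda_{\gamma(z)}}(\zeta) = e_{\Lambda_z}(J_\gamma \zeta)$. Taking $\zeta = \gamma(z)/\theta^{n+1}$ and $\zeta = 1/\theta^{n+1}$ turns the arguments into $(az+b)/\theta^{n+1}$ and $(cz+d)/\theta^{n+1}$. The key auxiliary identity is
\[
\sum_{n=0}^{\infty} e_{\Lambda_z}\bigl(a\xi/\theta^{n+1}\bigr) t^n = \chi_t(a) \sum_{n=0}^{\infty} e_{\Lambda_z}\bigl(\xi/\theta^{n+1}\bigr) t^n, \qquad a \in A, \ \xi \in \{z, 1\},
\]
proved by expanding $a = \sum_i a_i \theta^i$, using $e_{\Lambda_z}(\theta^j\xi) = 0$ for $j \geq 0$ (since $\theta^j\xi \in \Lambda_z$), and reindexing. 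Combining these yields the transformation rules for $s_1, s_2$; multiplying by the $z$-independent factor $\carperiod/\scar(t)$ transports them to $d_1, d_2$.

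For (2), specializing (1) to translations $\gamma = \bigl(\begin{smallmatrix}1 & a\\ 0 & 1\end{smallmatrix}\bigr)$ with $a \in A$ gives $d_2(z+a) = d_2(z)$ and $d_1(z+a) = d_1(z) + \chi_t(a) d_2(z)$. The first yields a $u$-expansion for $d_2$; the second, together with $d_2 \not\equiv 0$, precludes one for $d_1$, since the uniformizer $u$ is itself $A$-periodic. The explicit leading $u$-coefficients of $d_2$ follow by expanding $e_{\Lambda_z}(\theta^{-(n+1)}) = \sum_k \alpha_k(z) \theta^{-(n+1)q^k}$ via the formulas for $\alpha_k(z)$ of \cite[Theorem~3.1]{EP}, substituting into $s_2$, multiplying by $\carperiod\scar^{-1}(t)$, and retaining only the lowest-order contributions in $u$.

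For (3), I apply $\phi_\theta^{\Lambda_z} = \theta + \tilde g\tau + \tilde\Delta\tau^2$ (with $\tilde g, \tilde\Delta$ the coefficients of the Drinfeld module attached to $\Lambda_z$) coefficient-wise to $s_i$; the identity $\phi_\theta(e_{\Lambda_z}(\xi/\theta^{n+1})) = e_{\Lambda_z}(\xi/\theta^n)$ together with a reindexing yields $\phi_\theta(s_i) = t s_i + e_{\Lambda_z}(u_i)$ with $(u_1, u_2) = (z, 1)$, and since both $z$ and $1$ lie in $\Lambda_z$ the inhomogeneous term vanishes. Substituting $s_i = \carperiod^{-1}\scar d_i$ and using $\scar^{(k)} = \prod_{j=0}^{k-1}(t-\theta^{q^j})\scar$, the common factor $(t-\theta)\scar$ divides through; invoking the standard normalizations $\tilde g = \carperiod^{q-1} g$ and $\tilde\Delta = \carperiod^{q^2-1}\Delta$ between the lattice coefficients and the modular forms of \eqref{define_g}--\eqref{define_h} then collapses the result to \eqref{tau_eq_ds}. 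For linear independence over the $\tau$-constants, any relation $\alpha d_1 + \beta d_2 = 0$, after the substitution $z \mapsto z+1$ and subtraction using (2), forces $\alpha d_2 \equiv 0$, hence $\alpha = \beta = 0$.

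The main obstacle is (3): carefully tracking the powers of $\carperiod$ that distinguish the lattice coefficients $\tilde g, \tilde\Delta$ from the normalized modular forms $g, \Delta$ of \eqref{define_g}--\eqref{define_h}. Once that dictionary is invoked, the elimination of $\scar$ via its Carlitz functional equation $\scar^{(1)} = (t-\theta)\scar$ essentially forces the final form of the equation.
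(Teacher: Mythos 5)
Your proof is correct in substance, but note that the paper does not actually prove this proposition: its ``proof'' is a citation of Lemmas~6 and~8 and equation~(24) of \cite{Pel_order_of_vanishing}, so what you have written is essentially a self-contained reconstruction of Pellarin's original arguments. Part~(1) is complete and uses exactly the right mechanism: $J_\gamma\Lambda_{\gamma(z)}=\Lambda_z$, homogeneity $e_{c\Lambda}(c\zeta)=c\,e_\Lambda(\zeta)$, and the reindexing identity $\sum_n e_{\Lambda_z}(a\xi/\theta^{n+1})t^n=\chi_t(a)\sum_n e_{\Lambda_z}(\xi/\theta^{n+1})t^n$, which holds because $e_{\Lambda_z}(\theta^j\xi)=0$ for $j\ge 0$ and $\xi\in\{z,1\}$. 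Part~(3) also checks out: applying $\phi_\theta^{\Lambda_z}=\theta+\tilde g\tau+\tilde\Delta\tau^2$ gives $\phi_\theta(s_i)=ts_i$ with vanishing inhomogeneous term, and dividing through by the common factor $(t-\theta)\scar$ (from $\scar^{(1)}=(t-\theta)\scar$) together with $\tilde g=\carperiod^{\,q-1}g$, $\tilde\Delta=\carperiod^{\,q^2-1}\del$ yields \eqref{tau_eq_ds}; the independence argument via $z\mapsto z+1$ is clean, and the span statement follows from the standard bound $\dim V^\tau(L)\le \operatorname{ord}(L)=2$. The one thin spot is part~(2): $A$-periodicity of $d_2$ only gives a Laurent-type $u$-expansion a priori, so you still need boundedness of $d_2$ as $|z|_i\to\infty$ to exclude negative powers of $u$ and to get integrality of the coefficients, and the explicit terms $(\theta-t)u^{q-1}$ and $(\theta-t)u^{(q-1)(q^2-q+1)}$ in \eqref{d2_expansion} require a genuine computation with the $\alpha_k$ (or, as in Subsection~2.3 of the paper, with the $\GG_{1,k}$ and shadowed partitions) that you only gesture at. What your route buys is a treatment independent of \cite{Pel_order_of_vanishing}; what the paper's citation buys is brevity.
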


\begin{proof}
Results \eqref{mod_properties_d1_d2}, \eqref{d2_expansion}, are Lemmas~6, 8 in \cite{Pel_order_of_vanishing}, respectively. While \eqref{tau_eq_ds} is   \cite[(24)]{Pel_order_of_vanishing}.
\end{proof}

 Keeping with Pellarin's notation, let $\Tate_{<r}$ be the Tate algebra of formal power series $\sum_{n \geq 0} c_n t^n \in \CCc[[t]]$ that converge for $|t|<r$. If $r = \infty$, then we write $\Tate_\infty$ instead of $\Tate_{< \infty}$. Let $\mathcal{R}_{<r}$ be the ring that consists of formal series $\sum_{n \geq 0} f_n t^n$ such that: (1) for all $n$, $f_n$ is a rigid-analytic function $\Omega \to \CCc$; (2) for all $z \in \Omega$, $\sum_{n \geq 0} f_n (z) t^n$ is an element of $\Tate_{<r}$. 
The fraction field of $\mathcal{R} := \mathcal{R}_{<1}$ 
 will be denoted by $\mathcal{L}$, while the fraction field of 
\[
	\mathcal{R}_\infty = \bigcap_{r > 0} \mathcal{R}_{<r},
\]
will be denoted by $\mathcal{L}_\infty$. In this notation,  $d_1, d_2, \EE \in \mathcal{R}_\infty$.

\subsection{Vectorial Modular Forms and $\tau$-recurrent Sequences}

Let $\rho$ be a representation
\[
	\rho: \Gamma \to \GL_s ((\Fq((t))).
\]
The following definition is due to Pellarin.

\begin{definition}
A \emph{deformation of  a vectorial modular form} of weight $w$, dimension~$s$, type $m$ and radius $r > 0$ associated with the representation $\rho$ is a vector $\FF$ with entries in $\RadR_{<r}$, such that
\[
	\FF(\gamma(z)) = J_\gamma^w \det(\gamma)^{-m} \rho(\gamma) \FF(z), \qquad \forall \gamma \in \Gamma.
\]
The set of such vectors is denoted by $\DVMF{w}{m}{s}{\rho}{r}$. 
\end{definition}

Consider the representation $\rho_{t, 1}$ defined by
\[
	\rho_{t, 1} = \begin{bmatrix} \over{a} & \over{b} \\
	\over{c} & \over{d} \end{bmatrix},
\]
and its $l^\text{th}$ symmetric power $\rho_{t, l} := \Sym^l (\rho_{t, 1})$.  By definition $\rho_{t, l} = \Sym^{l} (\rho_{t, 1})$ can be realized on the vector space of homogeneous polynomials of degree $l$ via \[ X^i Y^{l-i} \mapsto (\over{a} X + \over{b} Y)^i (\over{c} X + \over{d} Y)^{l-i},\]
Let
\[
	 \Phi_l := {}^{\texttt{TR}} (d_1^l, d_1^{l-1} d_2, \ldots, d_1 d_2^{l-1}, d_2^l ), \]  where $\mathtt{TR}$ is the usual transpose. Property \eqref{mod_properties_d1_d2} implies $\Phi_l \in \DVMF{-1}{0}{l+1}{\rho_{t, l}}{ \infty}$.

We let $\Evec_l$ be defined to be the transpose of the row vector
\[
	 \frac{1}{L(\chi_t^l, l)} \sideset{}{'} \sum_{c, d \in A} \left  (\frac{\over{c}^l}{(cz + d)^l}, \frac{\binom{l}{1} \over{c}^{l-1} \over{d}}{(cz + d)^l}, \ldots, \frac{\binom{l}{l-1} \over{c} \over{d}^{l-1}}{(cz + d)^l}, \frac{\over{d}^l}{(cz + d)^l} \right ).
\] 
 An equivalent (but more `modular') definition\footnote{The reader should be aware that in his original preprint Pellarin omits the binomial coefficients from the definition of $\Evec_l$, but we have confirmed with Pellarin that the binomial coefficients in the definition of $\Evec_l$ need to be present.} and other properties of $\Evec_l$ can be found in \cite[Section~3.3.2]{Pel_tau_recur_seq}. Part $1$ from \cite[Proposition~21]{Pel_tau_recur_seq} shows that for $\gamma \in \Gamma$, we have
 \begin{equation} \label{transformation_property_of_E}
 \Evec_l (\gamma (z)) = J_\gamma^l \left ( {}^{\mathtt{TR}} (\rho_{t, l}^{-1}) (\gamma)  \right) \Evec_l (z).
 \end{equation}

\begin{proposition} \label{convergence_of_E}
The series defining $ \tau^k \Evec_l$, $k \geq 0$,  converges for $(z, t) \in \Omega \times B_{q^{q^k}}$. 
\end{proposition}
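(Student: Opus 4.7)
The plan is to rewrite $\tau^k \Evec_l$ as an explicit Eisenstein-like series and then establish pointwise convergence on $\Omega \times B_{q^{q^k}}$. The first step is to apply $\tau^k$ termwise. Since $\tau$ fixes $t$ and acts as the $q$th-power Frobenius on $\CCc$, the ingredients $\binom{l}{i}, \chi_t(c), \chi_t(d) \in \Fq[t]$ are all $\tau$-invariant, whereas $(cz+d)^l \in \CCc$ is raised to the $q^k$th power, and likewise $\tau^k L(\chi_t^l, l) = L(\chi_t^l, l q^k)$. Thus the $i$th component of $\tau^k \Evec_l$ is formally
\[
\frac{1}{L(\chi_t^l, l q^k)} \sideset{}{'}\sum_{c,d \in A} \frac{\binom{l}{i}\,\chi_t(c)^{l-i}\chi_t(d)^i}{(cz+d)^{l q^k}},
\]
and it remains to show each such series converges for every $(z,t) \in \Omega \times B_{q^{q^k}}$.

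Fix such a pair $(z,t)$. Because $\chi_t(c) = c(t)$ has $\Fq$-coefficients and $\deg_t \chi_t(c) = \deg_\theta c$, one has $|\chi_t(c)| \leq \max(1, |t|)^{\deg c}$, giving a numerator bound of $\max(1, |t|)^{(l-i)\deg c + i \deg d}$. For the denominator, I would invoke the non-Archimedean trichotomy for $|cz+d|$: it equals $|d|$ when $c = 0$; it equals $\max(|c||z|, |d|)$ when $c \neq 0$ and $|cz| \neq |d|$; and in the critical case $|cz| = |d|$, the bound $|cz+d| \geq |c||z|_i$ coming from $z \in \Omega$ must be used. In each case a short computation shows that the ratio decays geometrically in $\max(\deg c, \deg d)$, with decay rate governed by $\max(1, |t|)/q^{q^k}$, which is strictly less than $1$ by the hypothesis $|t| < q^{q^k}$. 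Hence the general term tends to zero and the series converges in the non-Archimedean sense.

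An equivalent but cleaner route is induction on $k$: the base $k=0$ is convergence of $\Evec_l$ on $\Omega \times B_q$, obtained by the same estimate in its simplest form, and the inductive step uses that $\tau$ carries $\RadR_{<r}$ into $\RadR_{<r^q}$ (indeed, if $\sum a_n(z)\, t^n \in \Tate_{<r}$ pointwise then $|a_n(z)|^q \rho^n = (|a_n(z)|\rho^{n/q})^q \to 0$ for every $\rho < r^q$). I expect the main obstacle to be the critical trichotomy case $|cz| = |d|$: cancellation in $|cz+d|$ is then maximal and the only available bound involves $|z|_i$, forcing the constants in the estimate to depend on $|z|_i$. This precludes uniform convergence on all of $\Omega$, but delivers uniform convergence on admissible subsets where $|z|_i$ is bounded below, which is what is needed for rigid analyticity of the coefficient functions in $z$.
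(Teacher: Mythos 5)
Your argument is correct, and it reaches the conclusion by a route that differs from the paper's in the one place where the estimate is delicate. Both proofs rest on the same core comparison: the numerator grows like $\max(1,|t|)^{l\max(\deg c,\deg d)}$ while the denominator grows like $q^{lq^k\max(\deg c,\deg d)}$ (up to constants depending on $z$), so the terms tend to zero whenever $|t|<q^{q^k}$; and both use the observation that $\tau$ sends a series convergent for $|t|<r$ to one convergent for $|t|<r^q$ to handle general $k$. The difference is how the problematic case $|cz|=|d|$, where $|cz+d|$ can drop below $\max(|cz|,|d|)$, is treated. The paper avoids it entirely: it uses the transformation law \eqref{transformation_property_of_E} to move $z$ first by translations $z\mapsto z+a$ so that $|z|_i=|z|$, and then by the full $\Gamma$-action into the region $|z|_i=|z|\ge 1$, where $|cz+d|=\max(|cz|,|d|)$ holds for all $c,d$ and only the two clean cases of the trichotomy occur. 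You instead work at an arbitrary $z\in\Omega$ and absorb the cancellation with the bound $|cz+d|=|c|\,|z+d/c|\ge |c|\,|z|_i$, which is valid since $d/c\in K_\infty$. Your version is more elementary (it does not need the modular transformation property of $\Evec_l$, which the paper imports from Pellarin) at the cost of constants depending on $|z|_i$; the paper's version buys cleaner estimates at the cost of invoking modularity. One small caveat on your closing remark: for uniform convergence on admissible subsets you need $|z|$ bounded above as well as $|z|_i$ bounded below (both hold on the standard affinoid cover of $\Omega$), but this does not affect the pointwise convergence the proposition actually asserts.
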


\begin{proof} Note that if $f(t)$ converges for $|t| < r$, then $\tau f(t)$ converges for $|t| <  r^q$, thus it suffices to prove the case $k=0$. Write $|t| = q^\epsilon$, with $\epsilon < 1$. For such $t$, the series
\[
	L(\chi_t^l, l) = \sum_{c \in A_+} \frac{\over{c}^l}{c^l}
\]
converges, since
\[
	\lim_{|c| \to \infty} \left | \over{c}^l c^{-l} \right | = \lim_{|c| \to \infty} |c|^{(\epsilon-1) l} = 0.
\]
Assume that $|z| \geq 1$. By property \eqref{transformation_property_of_E}, we see that, for $a \in A$,  $\Evec_l (z + a) = M \Evec_l (z)$, where $M$ is a matrix with coefficients in $\mathbb{C}_\infty[t]$ that do not depend on $z$. Therefore the convergence of the series defining $\Evec_l$ is not affected by transformations of the form $z \mapsto z + a$, $a \in A$. Since $K_\infty$ is locally compact we know that $|z|_i = |z - x_0|$ for some $x_0 \in K_\infty$. By applying transformations of the form $z \mapsto z + a$, $a \in A$, we can assume $|x_0| < 1$, hence $|z|_i = |z - x_0| = \text{max}\{  |z|, |x_0| \} = |z|$. We can therefore assume without loss of generality that $|z| = |z|_i$. For such $z$, we have $|c z + d| = \text{max} \{ |cz|, |d| \}$ for $c, d \in A$.

We need to show that the series
\begin{equation} \label{series_star}
	\sideset{}{'} \sum_{c, d \in A} \frac{\over{c}^i \over{d}^{l-i}}{(cz + d)^l}
\end{equation}
converges for any $i$ such that $0 \leq i \leq l$. 

If $|cz| < |d|$, then
\[
     \left |	\frac{\over{c}^i \over{d}^{l-i}}{ (cz + d)^l } \right | = \frac{|c|^{i \epsilon} |d|^{(l-i) \epsilon}}{|d|^l} < \frac{1}{|z|^{\epsilon i}} |d|^{(\epsilon - 1) l}.
\]
 Since $|cz| < |d|$ implies that $|c| \to \infty \Rightarrow |d| \to \infty$, the last quantity tends to $0$ as $|c| \to \infty$ or $|d| \to \infty$. A similar argument shows that when $|cz| > |d|$ if  $|c| \to \infty$ or $ |d| \to \infty$ we have
 \[
  \left |	\frac{\over{c}^i \over{d}^{l-i}}{ (cz + d)^l } \right | \to 0.
 \]
 We conclude that for $|z| \geq 1$, $|t| = q^\epsilon$, $\epsilon < 1$, the series \eqref{series_star} converges. Since every $z \in \Omega$ is equivalent under the action of $\Gamma$ to an element in $\mathfrak{F} = \{ z \in \Omega : |z|_i = |z| \geq 1 \}$ and $\Evec_l$ satisfies \eqref{transformation_property_of_E} under the action of $\Gamma$ (i.e., the action of $\Gamma$ permutes the components of $\Evec_l$), the result for $|z| \geq 1$ implies that for all $z \in \Omega$. \end{proof}

Property \eqref{transformation_property_of_E} and the previous proposition show that $\Evec_l \in \DVMF{l}{0}{l+1}{ ^{\mathtt{TR}}\rho_{t, l}^{-1}}{q}$. By definition \begin{equation} \GG_{l, k} = (\tau^k \Evec_l) \cdot  \Phi_l,\end{equation} where the dot denotes the usual inner product of vectors.  The modular properties of $\Evec_l$ and $\Phi_l$ imply that for $k \geq 0$, \[\GG_{l, k} \in \DVMF{lq^k-l}{0}{1}{\mathbf{1}}{q^{q^k}},\] where $\mathbf{1}$ is the trivial representation.

Next we recall the theory of $\tau$-recurrent sequences as described in \cite[Section~2]{Pel_tau_recur_seq}. Let $\tauField$ be a field  together with an infinite order automorphism $\tau$. The fixed field of $\tau$ will be denoted by $\tauField^\tau$. Let $L = A_0 \tau^0 + \cdots + A_s \tau^s \in \tauField[\tau]$ be a $\tau$-linear operator such that $A_0 \neq 0, A_s \neq 0$ ($s$ is the \emph{order} of $L$). Given a sequence $\GG = \{ \GG_k \}_{k \in \ZZ}$ with elements in $\tauField$, we write $L(\GG)$ for the sequence
\[
	\{ A_0 \tau^0 \GG_k + \cdots + A_s \tau^s \GG_{k-s} \}_{k \in \ZZ}.
\]
The sequence $\GG$ is a \emph{$\tau$-recurrent sequence} for $L$ if $L(\GG) \equiv 0$.  The space of all $\tau$-recurrent sequences for $L$ is a finite-dimensional $\tauField$-vector space, which we denote by~$V(L)$. The space of solutions to the associated \emph{$\tau$-difference equation}
\begin{equation} \label{general_tau_dif_eq}
	A_0 \tau^0 X + \cdots + A_s \tau^s X = 0
\end{equation}
is denoted by $V^\tau(L)$. Any solution $x$ to \eqref{general_tau_dif_eq} gives an element of $V(L)$ by simply taking the constant sequence $\{ x \}_{k \in \ZZ}$. Pellarin shows (see \cite[Propositions~10, 11]{Pel_tau_recur_seq}) that if  $x_1, \ldots, x_s$ are $\tauField^\tau$-linearly independent elements of $\tauField$, then there exists an explicit procedure for computing a  $\tau$-linear operator $L$ of order $s$ (unique if we assume the normalization $A_s=1$; else $L$ is unique up to left multiplication) such that  $\{x_i:1\leq i\leq s\}$ is a basis of $V^\tau(L)$. In addition, for any vector $\Evec = (e_1, \ldots, e_s) \in \tauField^s$ the sequence $\GG$, defined by
\[
	 \GG_{k} = (\tau^k \Evec) \cdot (x_1, \ldots, x_s),
\]
belongs to $V(L)$ and any $\GG \in V(L)$ is of this form for a unique $\Evec \in \tauField^s$.

Applying this to $\tauField = \mathcal{L}$, $(x_1, \ldots, x_s) = (d_1^l, d_1^{l-1} d_2, \ldots, d_2^{l})$ we have

\begin{proposition}
 The sequence $\{\GG_{l, k} \}_{k \in \ZZ}$, defined by
 \[
 	\GG_{l, k} := (\tau^k \Evec_l) \cdot  \Phi_l , \qquad \qquad k \in \ZZ, 
\]
 is a $\tau$-recurrent sequence that satisfies the unique normalized $\tau$-difference equation $L_l$ satisfied by   $d_1^l,  d_1^{l-1} d_2, \ldots, d_1 d_2^{l-1},  d_2^l$. 
 \end{proposition}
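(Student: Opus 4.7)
This proposition is a direct instantiation of the general framework of $\tau$-recurrent sequences recalled in the paragraph immediately preceding it, applied with $\tauField = \mathcal{L}$ and with $(x_1, \ldots, x_{l+1}) = (d_1^l, d_1^{l-1}d_2, \ldots, d_1 d_2^{l-1}, d_2^l)$. The plan is to check the one genuine hypothesis of that framework — namely $\mathcal{L}^\tau$-linear independence of the $x_i$ — and then read off both the existence and uniqueness of $L_l$ and the inclusion $\{\GG_{l,k}\} \in V(L_l)$ from Propositions~10 and~11 of \cite{Pel_tau_recur_seq}.

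For the linear independence, recall from Proposition~\ref{properties_of_d1_d2}(3) that $(d_1, d_2)$ is a basis of the two-dimensional $\mathcal{L}^\tau$-space $V^\tau$ of solutions to \eqref{tau_eq_ds}. I would deduce $\mathcal{L}^\tau$-linear independence of the $l+1$ symmetric monomials $d_1^{l-i} d_2^i$ by one of two standard routes: either (a) by computing the Casoratian of these monomials and showing it is a nonzero power of the order-two Casoratian $d_1 d_2^{(1)} - d_2 d_1^{(1)}$, the latter being nonzero precisely because $d_1, d_2$ are $\mathcal{L}^\tau$-linearly independent; or (b) by reducing to the transcendence of $d_1/d_2$ over $\mathcal{L}^\tau$, since any relation $\sum_{i=0}^l c_i d_1^{l-i} d_2^i = 0$ with $c_i \in \mathcal{L}^\tau$ would, after division by $d_2^l$, exhibit $d_1/d_2$ as a root of a nontrivial polynomial of degree at most $l$ over $\mathcal{L}^\tau$. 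Transcendence of the ratio can then be extracted from the modular transformation formulas \eqref{mod_properties_d1_d2}, which describe precisely how $d_1/d_2$ is moved under $\Gamma$ and obstruct any algebraic relation with $\tau$-invariant coefficients.

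With linear independence established, Pellarin's construction immediately produces the unique normalized $\tau$-linear operator $L_l \in \mathcal{L}[\tau]$ of order $l+1$ having $\{d_1^{l-i} d_2^i : 0 \leq i \leq l\}$ as an $\mathcal{L}^\tau$-basis of $V^\tau(L_l)$; this is the operator named in the statement. The second half of his result asserts that for every $\Evec \in \mathcal{L}^{l+1}$, the sequence $\{(\tau^k \Evec) \cdot (x_1, \ldots, x_{l+1})\}_{k \in \ZZ}$ lies in $V(L_l)$. Specializing to $\Evec = \Evec_l$ — which belongs to $\mathcal{L}^{l+1}$ since its components lie in $\mathcal{R}_{<q}$ by Proposition~\ref{convergence_of_E} — delivers $\GG_{l,k} = (\tau^k \Evec_l) \cdot \Phi_l \in V(L_l)$, completing the proof. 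The only real obstacle is the linear-independence step above; once that is in place the rest is a mechanical invocation of Pellarin's framework.
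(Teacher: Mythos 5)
Your overall route is the same as the paper's: the proposition is stated as an immediate instantiation of Pellarin's Propositions~10 and~11 from \cite{Pel_tau_recur_seq} with $\tauField = \mathcal{L}$ and $(x_1,\ldots,x_{l+1}) = (d_1^l, d_1^{l-1}d_2,\ldots,d_2^l)$, and the paper offers no further argument beyond that citation. The one place where you diverge is the linear-independence hypothesis. The paper does not prove it either; it simply notes afterwards that independence over $\tauField^\tau = \Fq(t)$ ``is also part of the result'' and cites Lemmas~14 and~20 of \cite{Pel_tau_recur_seq}. You correctly identify this as the only genuine hypothesis to check, which is the right instinct, but neither of your two sketches closes it. Route~(a) asserts without computation that the Casoratian of the $l+1$ monomials is a nonzero power of $d_1 d_2^{(1)} - d_2 d_1^{(1)}$; that identity is plausible (it is a symmetric-power/Vandermonde phenomenon of the kind the paper later exploits via $\det \Sym^l$) but it is exactly the content that needs proving. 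Route~(b) correctly reduces the question to showing $d_1/d_2$ is not algebraic of degree $\le l$ over $\Fq(t)$, but then merely asserts that the transformation formulas \eqref{mod_properties_d1_d2} ``obstruct any algebraic relation''; note that linear independence of $d_1, d_2$ alone does not suffice (over $\mathbb{Q}$, $\sqrt{2}$ and $1$ are independent but $(\sqrt 2)^2 - 2\cdot 1^2 = 0$), so this non-algebraicity is a real claim requiring an argument, and it is precisely what Pellarin's cited lemmas supply. In short: same skeleton as the paper, with the citation replaced by two proof sketches that each stop one step short of the actual content.
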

 
Note that the linear independence of $\{ d_1^l, d_1^{l-1} d_2, \ldots, d_1 d_2^{l-1}, d_2^l \}$ over $\tauField^\tau = \Fq(t)$ is also part of the result (see \cite[Lemmas~14 \& 20]{Pel_tau_recur_seq}). 
 
\noindent
{\bf Examples:}
The sequence $\{ \GG_{1, k} \}_{k \in \ZZ}$ is a $\tau$-recurrent sequence for \[ L_1 := \tau^0 - g \tau^1 - \del (t - \theta^q) \tau^2 \] and $\{ \GG_{2, k} \}_{k \in \ZZ}$ is a $\tau$-recurrent sequence for:
\begin{equation} \label{L_for_G2} \begin{aligned}
 & L_2 :=  \tau^0 - g^{1-q} (g^{1 + q} + \del (t - \theta^q)) \tau^1 \\
&  \qquad  \ \ - \del (t - \theta^q) (g^{1 + q} + \del (t-\theta^q)) \tau^2 + g^{1-q} \del^{1 +  2q} (t - \theta^q) (t - \theta^{q^2})^2 \tau^3.
\end{aligned}
\end{equation}

In \cite[Theorem~4]{Pel_L_series} Pellarin determines $\{ \GG_{1, k} \}_{k \in \ZZ}$ completely by computing its first two non-negative terms: $\GG_{1, 0} = -1$ and $\GG_{1, 1} = -g$. Our main theorem (Theorem~\ref{mainthm}) shows that $\GG_{l, k} = (-1)^{l+1} \GG_{1, k}^l$. 

It follows from the following proposition that $\GG_{1, k}^l$ is part of a basis for $V(L_l)$.

\begin{proposition}
Let $\{(\calG_k), (\calH_k)\}$ be a basis of $V(L_1)$, then for any $l \geq 1$,
\[
\{(\calG_k^i \calH_k^{l-i}): 0\leq i\leq l \} 
\]
forms a basis of $V(L_l)$, where $L_l$ is the unique normalized $\tau$-difference equation satisfied by   $d_1^l,  d_1^{l-1} d_2, \ldots, d_1 d_2^{l-1},  d_2^l$.
\end{proposition}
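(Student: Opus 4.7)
\emph{Proof plan.} The plan is to exploit the parametrization $\Evec \mapsto ((\tau^k\Evec)\cdot \Phi_l)_k$ giving a $\tauField$-linear bijection $\tauField^{l+1} \xrightarrow{\sim} V(L_l)$ (and its analogue $\tauField^2 \xrightarrow{\sim} V(L_1)$ via $\Phi_1 = {}^{\mathtt{TR}}(d_1, d_2)$) to reduce the claim to a classical linear-algebra fact about symmetric powers of a two-dimensional space. Writing $\calG_k = \tau^k(e_1)\, d_1 + \tau^k(e_2)\, d_2$ and $\calH_k = \tau^k(f_1)\, d_1 + \tau^k(f_2)\, d_2$ with $(e_1, e_2), (f_1, f_2) \in \tauField^2$, the hypothesis that $\{\calG, \calH\}$ is a basis of $V(L_1)$ gives exactly that these two vectors are $\tauField$-linearly independent.

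Next I would expand each product via the binomial theorem, using that $\tau$ is a ring homomorphism and that $\binom{a}{b} \in \Fq \subseteq \tauField^\tau$. Setting
\[
P_i(X, Y) := (e_1 X + e_2 Y)^i (f_1 X + f_2 Y)^{l-i} = \sum_{n=0}^l c_{i, n}\, X^n Y^{l-n} \in \tauField[X, Y],
\]
one obtains
\[
\calG_k^i\, \calH_k^{l-i} = \sum_{n=0}^l \tau^k(c_{i, n})\, d_1^n d_2^{l-n} = (\tau^k \Evec_i) \cdot \Phi_l,
\]
where $\Evec_i \in \tauField^{l+1}$ is the vector of coefficients $c_{i,n}$ arranged to match $\Phi_l$. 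This both shows that $\calG_k^i \calH_k^{l-i} \in V(L_l)$ and identifies its associated element of $\tauField^{l+1}$.

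To finish I would check the $\tauField$-linear independence of $\{\Evec_i\}_{0 \le i \le l}$, which via the coefficients-to-polynomial identification amounts to linear independence of $\{P_i\}_{0 \le i \le l}$ in the $(l+1)$-dimensional space of homogeneous degree-$l$ polynomials. Since $(e_1, e_2)$ and $(f_1, f_2)$ are linearly independent, the linear forms $e_1 X + e_2 Y$ and $f_1 X + f_2 Y$ form a $\tauField$-basis of $\tauField[X, Y]_1$; changing coordinates to $X' = e_1 X + e_2 Y,\ Y' = f_1 X + f_2 Y$ sends $P_i$ to $(X')^i (Y')^{l-i}$, which is the standard monomial basis of the degree-$l$ component of $\tauField[X', Y']$. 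Since $\dim_{\tauField} V(L_l) = l+1$ by Pellarin's general theory (together with the linear independence of $\{d_1^{l-j} d_2^j\}_{j=0}^l$ over $\tauField^\tau$), these $l+1$ linearly independent sequences form a basis. The only subtle point---rather than a real obstacle---is that the $\tauField$-vector-space structure on $V(L_l)$ is the twisted one $(a \cdot \GG)_k = \tau^k(a)\, \GG_k$ transported through the parametrization, not the naive componentwise scalar action (which would fail to preserve $V(L_l)$); it is precisely this twisted structure that makes the parametrization $\tauField$-linear and lets the above reduction go through.
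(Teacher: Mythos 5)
Your proposal is correct and follows essentially the same route as the paper: both write $\calG_k,\calH_k$ in terms of $d_1,d_2$ via Pellarin's parametrization, expand the products to see that the coefficient matrix of $\{\calG_k^i\calH_k^{l-i}\}$ in terms of $\{d_1^m d_2^{l-m}\}$ is $\Sym^l$ of the $2\times 2$ coefficient matrix, and conclude by the dimension bound on $V(L_l)$. The only cosmetic difference is that the paper verifies invertibility of $\Sym^l$ of that matrix by computing its determinant as $(ad-bc)^{(l^2+l)/2}$, whereas you do it by a change of coordinates in the space of degree-$l$ homogeneous forms.
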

\begin{proof}
According to \cite[Proposition~11]{Pel_tau_recur_seq}, there exist $a,b,c,d \in \CCc[t]$ such that
\begin{equation}
\calG_k=a^{(k)}d_1+b^{(k)}d_2, \qquad \qquad  \calH_k=c^{(k)}d_1+d^{(k)}d_2.
\end{equation}
We thus get 
\begin{equation}\label{prodsol}
\calG^i_k \calH^{l-i}_k=\sum_{m=0}^l C_{mi}^{(k)} d_1^{m}d_2^{l-m},
\end{equation}
where
\[
C_{mi}=\sum_{j=0}^i \binom{i}{j}\binom{l-i}{m-j}a^j b^{i-j} c^{m-j}d^{l-m-(i-j)}.
\]
Thus, again by the same proposition from \cite{Pel_tau_recur_seq}, we see that \eqref{prodsol} is equivalent to $(\calG_k^i\calH_k^{l-i})\in V(L_l)$. The linear independence of $(\calG_k)$ and $(\calH_k)$ is equivalent to $ad-bc\neq 0$. However, we have the following equality of matrices
\[
(C_{mi})_{0\leq i, m \leq l}=\Sym^l\left(\begin{array}{ll}
a&b\\
c&d
\end{array}\right),
\]
which yields
\[
\det{(C_{mi})}=(ad-bc)^{\frac{l^2+l}{2}}\neq 0,
\]
again implying the linear independence of our proposed basis. This proves the result as $V(L_l)$ has dimension at most $l+1$.
\end{proof}

\subsection{Using Shadowed Partitions to Approximate $d_2$}

In this subsection, we give a new formula \eqref{G_formula_two} for the sequence $\GG_{1, k}$, $k \geq 1$. This formula can be used to give a method for computing the $u$-expansion of $d_2$ which, in contrast with the original computation of Pellarin, is not recursive in the coefficients of $d_2$. This is done via \emph{shadowed partitions} as in~\cite{EP2}. If $S \subset \ZZ$ and $j \in \ZZ$, then let $S + j := \{ i + j : i \in S \}$. Let $r, n \in \NN$. We define the \emph{order $r$ index-shadowed partition of $n$} by
\[
\begin{aligned}
	P_r (n) & :=  \big \{ (S_1, S_2, \ldots, S_r) : S_i \subset \{ 0, 1, \ldots, n-1 \},   \\
		& \text{and } \{ S_i + j : 1 \leq i \leq r, 0 \leq j \leq i-1 \} \text{ form a partition of } \{ 0, 1, \ldots, n-1 \}  \big \}.
\end{aligned}
\]

\begin{theorem} \label{d_2_approximation} For $k \geq 1$, we have
\begin{equation} 
	d_2 - \sum_{(S_1 , S_2) \in P_2 (k)} \prod_{j \in S_1} \prod_{i \in S_2} g^{q^{j}} (t - \theta^{q^{i + 1}}) \del^{q^i} \in u^{q^{k-1} (q-1)} \Fq[\theta, t][[u]].
\end{equation}
\end{theorem}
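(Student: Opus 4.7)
The plan is to iterate the $\tau$-difference equation for $d_2$ given by Proposition~\ref{properties_of_d1_d2}(3) and to control the resulting remainder using the $u$-valuations of $g$ and $\del$. Solving \eqref{tau_eq_ds} for $d_2$ yields $d_2 = g\, d_2^{(1)} + \del(t-\theta^q) d_2^{(2)}$, and applying $\tau^j$ gives, for every $j \geq 0$, the identity $d_2^{(j)} = g^{q^j} d_2^{(j+1)} + \del^{q^j}(t-\theta^{q^{j+1}}) d_2^{(j+2)}$.

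By induction on $k \geq 0$ I would first establish an identity of the form $d_2 = A_k\, d_2^{(k)} + C_k\, d_2^{(k+1)}$ with $A_0 = 1$, $C_0 = 0$ and the coupled recurrences $A_{k+1} = A_k g^{q^k} + C_k$ and $C_{k+1} = A_k \del^{q^k}(t - \theta^{q^{k+1}})$. Eliminating $C_k$ produces the Fibonacci-type recurrence $A_{k+1} = A_k g^{q^k} + A_{k-1}\del^{q^{k-1}}(t-\theta^{q^k})$ with $A_0 = 1$, $A_1 = g$. A second induction, conditioning on whether the largest position $k$ is covered by a singleton ($k \in S_1$, leaving a partition in $P_2(k)$) or by a doubleton whose left endpoint is $k-1$ ($k-1 \in S_2$, leaving a partition in $P_2(k-1)$), shows that $A_k$ coincides with the sum $\sum_{(S_1,S_2) \in P_2(k)} \prod_{j \in S_1} g^{q^j} \prod_{i \in S_2} \del^{q^i}(t-\theta^{q^{i+1}})$ appearing in the statement; this is the standard Fibonacci decomposition of tilings, translated into the language of order-$2$ shadowed partitions.

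For the remainder, write $d_2 - A_k = A_k(d_2^{(k)} - 1) + C_k d_2^{(k+1)}$. Applying $\tau^k$ to the expansion \eqref{d2_expansion} shows that $d_2^{(k)} - 1 \in u^{q^k(q-1)}\Fq[\theta,t][[u]]$, while $A_k$ has an integral $u$-expansion, so the first summand lies in $u^{q^k(q-1)}\Fq[\theta,t][[u]]$. For the second, $\del = -h^{q-1} \in u^{q-1}\Fq[\theta][[u]]$ by \eqref{define_h}, hence $\del^{q^{k-1}} \in u^{q^{k-1}(q-1)}\Fq[\theta][[u]]$ and therefore $C_k = A_{k-1}\del^{q^{k-1}}(t-\theta^{q^k}) \in u^{q^{k-1}(q-1)}\Fq[\theta,t][[u]]$. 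Since $q^k(q-1) \geq q^{k-1}(q-1)$, both contributions lie in $u^{q^{k-1}(q-1)}\Fq[\theta,t][[u]]$, yielding the claim. The main point requiring care is the matching between the iterated recurrence and the recursive structure of $P_2(k)$; the valuation estimates are then immediate from the known $u$-expansions of $d_2$, $g$, and $h$.
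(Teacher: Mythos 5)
Your proof is correct, but it is organized differently from the paper's. The paper does not unroll the $\tau$-difference equation itself: it quotes the closed form $\GG_{1,k} = -\sum_{(S_1,S_2)\in P_2(k)}\prod_{j\in S_1}\prod_{i\in S_2} g^{q^j}(t-\theta^{q^{i+1}})\del^{q^i}$ from Lemma~3.3 of \cite{EP2} (formula \eqref{G_formula_two}), checks the base cases $d_2+\GG_{1,1}\in u^{q-1}\Fq[\theta,t][[u]]$ and $d_2+\GG_{1,2}\in u^{q(q-1)}\Fq[\theta,t][[u]]$ by comparing $u$-expansions, and then runs an induction on $d_2+\GG_{1,k}$ using the fact that both $\{\GG_{1,k}\}$ and the constant sequence $d_2$ satisfy the same recurrence $L_1=\tau^0-g\tau^1-\del(t-\theta^q)\tau^2$. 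You instead iterate $d_2=g\,d_2^{(1)}+\del(t-\theta^q)d_2^{(2)}$ to produce the exact identity $d_2=A_k d_2^{(k)}+C_k d_2^{(k+1)}$, identify $A_k$ with the shadowed-partition sum by the domino-tiling decomposition of $P_2(k)$, and read off the valuation from the explicit remainder $A_k(d_2^{(k)}-1)+C_k d_2^{(k+1)}$ together with $d_2-1\in u^{q-1}\Fq[\theta,t][[u]]$ and $\del\in u^{q-1}\Fq[\theta][[u]]$. The underlying engine (the order-two $\tau$-difference equation) is the same, but your route is self-contained --- it reproves the combinatorial formula rather than citing \cite{EP2}, never needs the sequence $\GG_{1,k}$ or its base-case computations, and yields the error term in closed form --- at the cost of an extra induction to match $A_k$ with $P_2(k)$. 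One cosmetic slip: in the tiling step the largest position of $\{0,\dots,k-1\}$ is $k-1$, not $k$, though the recurrence $A_{k+1}=A_k g^{q^k}+A_{k-1}\del^{q^{k-1}}(t-\theta^{q^k})$ and the base cases $A_0=1$, $A_1=g$ that you actually use are correct.
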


\begin{proof} According to Lemma~3.3 from \cite{EP2}, for $k \geq 1$ we have
\begin{equation} \label{G_formula_two}
\GG_{1, k} = - \sum_{(S_1, S_2) \in P_2 (k)} \prod_{j \in S_1} \prod_{i \in S_2} g^{q^{j}} (t - \theta^{q^{i+1}}) \del^{q^i}.
\end{equation}
Using this we see that $\GG_{1, 1} = -g$, $\GG_{1, 2} = -g^{q+1} - (t - \theta^q) \del$. By comparing the $u$-expansions of $d_2$ \eqref{d2_expansion} with the $u$-expansions of $\GG_{1, 1}$ and $\GG_{1, 2}$ we see that \[ d_2 + \GG_{1, 1} \in u^{q-1} \Fq[\theta, t][[u]] \quad \text{and} \quad  d_2 + \GG_{1, 2} \in u^{q(q-1)} \Fq[\theta, t][[u]]. \] Since~$\GG_{1, k}$ is a $\tau$-recurrent sequence for $L_1$ and $L_1(d_2) = 0$, it follows by induction that $d_2 + \GG_{1, k} \in u^{q^{k-1} (q-1)} \Fq[\theta, t][[u]]$.
\end{proof}


\section{The Proof of Theorem \ref{mainthm}} \label{the_proof}

We start the proof of Theorem \ref{mainthm} with several lemmas.

\begin{lemma} \label{lemma1}
\[
	1 + \sum_{u \in \Fq} \frac{X + u}{Y + u}  = \frac{Y^q - X}{Y^q - Y}.
\]
\end{lemma}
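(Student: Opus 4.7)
The plan is to reduce the sum over $\Fq$ to the classical partial fraction identity for $Y^q-Y$. First I would split each summand via the rewriting
\[
\frac{X+u}{Y+u} = 1 + \frac{X-Y}{Y+u},
\]
so that
\[
\sum_{u \in \Fq} \frac{X+u}{Y+u} = q + (X-Y)\sum_{u \in \Fq} \frac{1}{Y+u}.
\]
Since we are working in characteristic $p$ and $q$ is a power of $p$, the term $q$ vanishes, and the whole problem reduces to evaluating $\sum_{u \in \Fq} \frac{1}{Y+u}$.

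The key step is the standard factorization $\prod_{u \in \Fq}(Y+u) = Y^q - Y$ (using that $u \mapsto -u$ is a bijection of $\Fq$). Taking the logarithmic derivative with respect to $Y$ on both sides gives
\[
\sum_{u \in \Fq} \frac{1}{Y+u} = \frac{(Y^q-Y)'}{Y^q-Y} = \frac{-1}{Y^q-Y},
\]
where the derivative $(Y^q-Y)' = -1$ again uses characteristic $p$. Substituting back yields
\[
\sum_{u \in \Fq} \frac{X+u}{Y+u} = \frac{Y-X}{Y^q-Y},
\]
and adding $1$ to combine over the common denominator $Y^q-Y$ produces the claimed right-hand side $\frac{Y^q-X}{Y^q-Y}$.

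There is no real obstacle here; the only things one must be careful about are (i) that $q \equiv 0$ and $(Y^q)' \equiv 0$ in characteristic $p$, which is what makes the identity as clean as stated, and (ii) treating the expression as an identity of rational functions in $\CCc(X,Y)$ (or in any sufficiently large field containing $\Fq$) so that division by $Y+u$ and by $Y^q-Y$ is formally legitimate.
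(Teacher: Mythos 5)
Your proof is correct, and it shares the paper's key ingredient -- the logarithmic-derivative identity $\sum_{u \in \Fq} \frac{1}{Y+u} = \frac{-1}{Y^q-Y}$ obtained from $\prod_{u \in \Fq}(Y+u) = Y^q - Y$ -- but it organizes the computation differently. The paper splits the numerator as $X + u$ and evaluates $\sum_{u}\frac{X}{Y+u}$ and $\sum_{u}\frac{u}{Y+u}$ separately; the second sum requires an auxiliary computation, which the paper carries out in the generality $\sum_{u \in \Fq}\frac{u^l}{Y+u} = \frac{(-1)^{l+1}Y^l}{Y^q-Y}$ for $1 \leq l \leq q-1$ before specializing to $l=1$. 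Your decomposition $\frac{X+u}{Y+u} = 1 + \frac{X-Y}{Y+u}$ collapses everything onto the single identity for $\sum_u \frac{1}{Y+u}$ (the constant $q$ vanishing in characteristic $p$), so it is shorter and avoids the auxiliary sum entirely. What the paper's longer route buys is the explicit family of identities $\sum_u u^l/(Y+u)$, which is of independent interest in this circle of ideas, but for the lemma as stated your argument is complete and arguably cleaner.
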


\begin{proof}   \renewcommand{\qedsymbol}{} It is well-known that $\prod_{u\in \Fq}(Y-u)=Y^q-Y$. By  logarithmic differentiation: 
\[
	\sum_{u \in \Fq} \frac{1}{Y + u} = \frac{-1}{Y^q - Y}.
\]
If $1 \leq l \leq q-1$, then
\[
	\sum_{u \in \Fq} \frac{Y^l}{Y - u} - \sum_{u \in \Fq} \frac{u^l}{Y - u}  = \sum_{u \in \Fq} \sum_{j = 0}^{l-1} Y^j u^{l-1-j} = \sum_{j = 0}^{l-1} Y^j \sum_{u \in \Fq} u^{l-1-j}  = 0.
\]
Hence, for $1 \leq l \leq q-1$,
\[
	\sum_{u \in \Fq} \frac{u^l}{Y + u} =  (-1)^l \sum_{u \in \Fq} \frac{u^l}{Y - u} = (-1)^l \sum_{u \in \Fq} \frac{Y^l}{Y + u} = \frac{ (-1)^{l+1} Y^l}{Y^q - Y}.
\]
Now the result is a simple computation
\[
	 \sum_{u \in \Fq} \frac{X + u}{Y + u} =   \sum_{u \in \Fq}  \frac{X}{Y + u} +  \sum_{u \in \Fq} \frac{u}{Y + u}  = - \frac{X}{Y^q - Y} + \frac{Y}{Y^q - Y} = \frac{Y^q - X}{Y^q - Y} - 1.~\square
\]
\end{proof}

Next, we recall Lucas' Theorem \cite{Lucas} which states that for prime $p$, positive $e$, and $0 \leq n_0, n_1, \dots, n_m, i_0, i_1, \dots, i_m \leq p^e-1$ we have
\[
\binom{n_0+n_1p^e+\dots n_m p^{em}}{i_0+i_1p^e+\dots i_m p^{em}}\equiv \binom{n_0}{i_0}\binom{n_1}{i_1}\cdots\binom{n_m}{i_m} \pmod p.
\] 
We will use Lucas' Theorem in the proof of the next lemma.

\begin{lemma} \label{lemma2}
For $1 \leq l \leq q$, 
\[
	1 + \sum_{u \in \Fq} \left ( \frac{X + u}{Y + u} \right)^l = \left ( 1 + \sum_{u \in \Fq} \left (  \frac{X + u}{Y + u} \right) \right)^l 
\]
\end{lemma}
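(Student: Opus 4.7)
The plan is to expand both sides of the claimed identity as polynomials in $X-Y$ via the substitution $(X+u)/(Y+u) = 1 + (X-Y)/(Y+u)$ and the binomial theorem, then match coefficients.

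First, applying the binomial theorem term-wise yields
\[
1 + \sum_{u \in \Fq} \left(\frac{X+u}{Y+u}\right)^l = 1 + \sum_{j=0}^l \binom{l}{j}(X-Y)^j\, T_j(Y),
\]
where $T_j(Y) := \sum_{u \in \Fq}(Y+u)^{-j}$ (with $T_0 = q = 0$ in characteristic $p$). On the other side, Lemma~\ref{lemma1} rewrites $1 + \sum_u (X+u)/(Y+u)$ as $1 - (X-Y)/(Y^q-Y)$, whose $l$th power expands binomially as $\sum_{j=0}^l \binom{l}{j}(-1)^j (X-Y)^j/(Y^q-Y)^j$. Matching coefficients of $(X-Y)^j$, the lemma reduces to the identity $T_j(Y) = (-1)^j/(Y^q-Y)^j$ for every $1 \leq j \leq q$.

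I would establish this residue-style identity via partial fractions. Since $u \in \Fq$ gives $(Y+u)^q = Y^q + u$, one has the factorization $Y^q - Y = (Y+u) F_u(Y)$ with $F_u(Y) := (Y+u)^{q-1} - 1$; setting $w = Y+u$ so that $F_u = w^{q-1} - 1$, the expansion $1/F_u^j = (-1)^j (1 - w^{q-1})^{-j} = (-1)^j \sum_{k\geq 0}\binom{j+k-1}{k} w^{k(q-1)}$ exhibits the principal part of $1/(Y^q-Y)^j$ at $Y = -u$ as the finitely many terms with $k(q-1) < j$. For $1 \leq j \leq q$ the only such contribution is $k=0$, giving $(-1)^j/w^j$, except in the borderline case $j = q$ where $k = 1$ also falls below the threshold and threatens to add a term $(-1)^q\binom{q}{1}/w$. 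By Lucas's theorem, however, $\binom{q}{1} \equiv 0 \pmod p$, so that term vanishes. Summing the principal parts $(-1)^j/(Y+u)^j$ over $u \in \Fq$ and observing that both $1/(Y^q-Y)^j$ and $(-1)^j T_j(Y)$ tend to zero at infinity in characteristic $p$ (so the polynomial part of the partial-fraction decomposition must vanish), the identity $T_j = (-1)^j/(Y^q-Y)^j$ follows.

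The main obstacle is precisely this boundary case $j = q$: it is the invocation of Lucas's theorem to kill $\binom{q}{1}$ modulo $p$ that allows the hypothesis in the statement to extend all the way to $l \leq q$ rather than stopping at $l \leq q-1$, where the partial-fraction computation is completely routine.
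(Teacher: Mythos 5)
Your proof is correct, but it takes a genuinely different route from the paper's. The paper regards $P(X) := 1 + \sum_{u \in \Fq}\left(\frac{X+u}{Y+u}\right)^l$ as a polynomial in $X$ and shows that $X = Y^q$ is a root of multiplicity $l$ by evaluating $P$ and its first $l-1$ derivatives there, killing the resulting binomial sums $\binom{l(q-1)}{i(q-1)}$ and $\binom{(l-j)q-l}{i(q-1)}$ with Lucas' theorem, and then pins down the constant by setting $X = Y$. You instead expand both sides around $X = Y$ and match coefficients, which reduces the lemma to the one-variable identity $\sum_{u\in\Fq}(Y+u)^{-j} = (-1)^j(Y^q-Y)^{-j}$ for $1 \leq j \leq q$; this is exactly the statement that the $j$-th Goss polynomial of the ``lattice'' $\Fq$ equals $X^j$ in that range, a fact the paper itself invokes without proof in the $V_n = 0$ case of Lemma~\ref{lemma3}. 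Your partial-fraction derivation of it is sound: the factorization $Y^q - Y = (Y+u)\bigl((Y+u)^{q-1}-1\bigr)$, the Laurent expansion at $w = Y+u = 0$, the vanishing of the polynomial part because the rational function is proper, and the observation that the only extra principal-part term in the borderline case $j = q$ carries the coefficient $\binom{q}{1} \equiv 0 \pmod p$ (Lucas is overkill here, since $\binom{q}{1} = q = p^e$). What your route buys is that the only characteristic-$p$ input is this single vanishing coefficient, and that you avoid inferring root multiplicity from vanishing ordinary derivatives --- a step that is delicate in characteristic $p$ when $p < l \leq q$, since $j!$ annihilates the $j$-th derivative for $j \geq p$; what the paper's route buys is a self-contained two-variable argument that never needs the partial-fraction identity as a separate lemma.
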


\begin{proof} According to Lemma~\ref{lemma1}, we have
\[
	\left ( 1 + \sum_{u \in \Fq} \left (  \frac{X + u}{Y + u} \right) \right)^l  = \left ( \frac{Y^q - X}{Y^q - Y} \right)^l.
\]
Consider 
\[
	P(X) := 1 + \sum_{u \in \Fq} \left ( \frac{X + u}{Y + u} \right)^l
\]
as a polynomial in $X$. If $X = Y^q$, then
\[
\begin{aligned}
	P(Y^q) & = 1 + \sum_{u \in \Fq} \left ( \frac{Y^q + u}{Y + u} \right)^l = 1 + \sum_{u \in \Fq} (Y + u)^{l(q-1)} \\ \\
	& = 1 + \sum_{u \in \Fq} \sum_{i = 0}^{l(q-1)} \binom{l(q-1)}{i} Y^{l(q-1) - i} u^i  = 1 - 1 - \sum_{i = 1}^{l-1} \binom{l(q-1)}{i(q-1)} Y^{(l-i)(q-1)}\\
	& = 1 - 1 = 0.
\end{aligned}
\] 
We have used that for $1\leq i \leq l-1$, $\binom{l(q-1)}{i(q-1)} \equiv 0$ in $\Fq$, which follows by Lucas' Theorem since in $\Fq$ we have
\[
\binom{l(q-1)}{i(q-1)}=\binom{(l-1)q+(q-l)}{(i-1)q+(q-i)}=\binom{l-1}{i-1}\binom{q-l}{q-i}=0
\]
as $q-l<q-i$. Thus $X = Y^q$ is a root of $P(X)$. 

Next, we show that this root repeats $l$ times. To that end, we compute for any $1 \leq j \leq l-1$
\[
\begin{aligned}
	\frac{d^j P}{d X^j} (Y^q) & = l (l-1) \cdots (l- j+1) \sum_{u \in \Fq} (Y + u)^{(l-j)q - l} \\
	& = l (l-1) \cdots (l-j+1) \sum_{u \in \Fq} \sum_{i = 0}^{(l-j)q - l} \binom{(l-j)q - l}{i} Y^{(l-j)q - l - i} u^i \\
	& = - l (l-1) \cdots (l-j+1) \sum_{i = 1}^{l - \lceil \frac{jq}{q-1} \rceil} \binom{(l-j) q - l}{i (q-1)} Y^{(l-j) q - l - i(q-1)}  = 0, \\
\end{aligned}
\]
since all the binomial coefficients vanish by Lucas' Theorem.

This shows that $P(X)$ and $\left ( \frac{Y^q - X}{Y^q - Y} \right)^l$ are equal up to a constant. Substituting $X = Y$ shows that the constant is $1$ and finishes the proof.
\end{proof}

\begin{lemma} \label{lemma3}
Let $F$ be a field that contains $\Fq$. Let $V_1, \ldots, V_n$ be arbitrary elements of $F$ and $W_1, \ldots, W_n$ be a set of $\Fq$-linearly independent elements of $F$. For $1 \leq l \leq q$, 
\[
	\sideset{}{'} \sum_{u_1, \ldots, u_n \in \Fq} \left ( \frac{u_1 V_1 + \cdots + u_n V_n}{u_1 W_1 + \cdots + u_n W_n} \right)^l = (-1)^{l+1} \left ( \sideset{}{'} \sum_{u_1, \ldots, u_n \in \Fq} \frac{u_1 V_1 + \cdots + u_n V_n}{u_1 W_1 + \cdots + u_n W_n} \right)^l.
\]
\end{lemma}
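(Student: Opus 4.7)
The strategy is induction on $n$, with Lemma~\ref{lemma2} serving as the ``power-reducing'' identity at each step and Lemma~\ref{lemma1} used to put the resulting first-power sum in closed form. The base case $n=1$ is a direct computation: the primed sum over $u_1 \in \Fq^\times$ collapses on both sides to $(q-1)(V_1/W_1)^l = -(V_1/W_1)^l$, using $q-1 = -1$ in $\Fq$.

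For the inductive step $n-1 \to n$ (with $n \geq 2$), I assume without loss of generality that $V_n \neq 0$; otherwise either all $V_i$ vanish (both sides are $0$) or a relabelling reduces to this case. Split the primed sum over $(u_1, \ldots, u_n)$ according to whether $(u_1, \ldots, u_{n-1}) = 0$ or not, and perform the inner sum over $u_n$ first. Writing $A := \sum_{i < n} u_i V_i$ and $B := \sum_{i < n} u_i W_i$, the exceptional stratum $(u_1, \ldots, u_{n-1}) = 0$ contributes $-(V_n/W_n)^l$. For $(u_1, \ldots, u_{n-1}) \neq 0$, factoring $V_n/W_n$ out of the summand and applying Lemma~\ref{lemma2} with $X = A/V_n$, $Y = B/W_n$ (the hypothesis $Y \notin \Fq$ follows from the $\Fq$-linear independence of $W_1, \ldots, W_n$) reduces the inner sum to $\beta^l - (V_n/W_n)^l$, where
\[
\beta := \frac{V_n}{W_n} + \sum_{u_n \in \Fq}\frac{A + u_n V_n}{B + u_n W_n}.
\]
Summing across both strata, the total coefficient of $(V_n/W_n)^l$ is $-1 - (q^{n-1} - 1) = -q^{n-1}$, which vanishes modulo $p$ for $n \geq 2$, so the left-hand side reduces to $\sum_{(u_1, \ldots, u_{n-1}) \neq 0} \beta^l$. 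Running the same manipulation at $l=1$ (where Lemma~\ref{lemma2} is trivial) identifies $\sum_{(u_1, \ldots, u_{n-1}) \neq 0} \beta$ with the first-power version of the left-hand side of the lemma.

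To close the induction I would invoke Lemma~\ref{lemma1} to put $\beta$ in closed form, obtaining $\beta = (B^q V_n - A W_n^q)/(B^q W_n - B W_n^q)$. Because $u_i^q = u_i$ in $\Fq$, one has $B^q = \sum_{i < n} u_i W_i^q$, so both numerator and denominator are $\Fq$-linear in $(u_1, \ldots, u_{n-1})$: explicitly, $\beta = (\sum_{i<n} u_i \tilde V_i)/(\sum_{i<n} u_i \tilde W_i)$ with $\tilde V_i := V_n W_i^q - V_i W_n^q$ and $\tilde W_i := W_n W_i^q - W_i W_n^q$. The step I expect to be the main obstacle is verifying that the new weights $\tilde W_1, \ldots, \tilde W_{n-1}$ are still $\Fq$-linearly independent, since only then does the inductive hypothesis apply. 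This is a Frobenius argument: $\sum_{i < n} c_i \tilde W_i = 0$ with $c_i \in \Fq$ rewrites as $W_n (B')^q = W_n^q B'$ for $B' := \sum c_i W_i$, so $(B'/W_n)^q = B'/W_n$ forces $B' \in \Fq W_n$, and then the $\Fq$-linear independence of $W_1, \ldots, W_n$ forces every $c_i = 0$. With that in hand, the inductive hypothesis applied to the $(n-1)$-variable primed sum in the $\tilde V_i$ and $\tilde W_i$ gives $\sum \beta^l = (-1)^{l+1}(\sum \beta)^l$; combining this with the two identities of the previous paragraph yields $\textrm{LHS}_l = (-1)^{l+1}(\textrm{LHS}_1)^l$, which is exactly the claim.
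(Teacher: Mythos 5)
Your proof is correct and follows essentially the same route as the paper's: induction on $n$ with the sum over the last variable handled by Lemma~\ref{lemma2} and put in closed form by Lemma~\ref{lemma1}, producing new linear forms in $u_1,\ldots,u_{n-1}$ to which the inductive hypothesis applies. The only differences are cosmetic improvements: you dispatch the case $V_n=0$ by the permutation symmetry of the statement (the paper instead runs a separate computation using the Goss polynomial identity $\sum_{u}(Y+u)^{-l}=(\sum_u (Y+u)^{-1})^l$), and you supply the Frobenius argument for the $\Fq$-linear independence of the new weights $\tilde W_i$, which the paper only asserts.
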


\begin{proof}
We will use induction on $n$. If $n = 1$, then we have
\[
	\sideset{}{'} \sum_{u \in \Fq} \left ( \frac{u V_1}{u W_1} \right)^l = \left ( \frac{V_1}{W_1} \right )^l \sideset{}{'} \sum_{u \in \Fq} 1 =  - \left ( \frac{V_1}{W_1} \right)^l.
\]
While
\[
	(-1)^{l+1} \left ( \sideset{}{'} \sum_{u \in \Fq} \frac{u V_1}{u W_1} \right)^l = (-1)^{l+1} \left ( \frac{V_1}{W_1} \right)^l \left ( \sideset{}{'} \sum_{u \in \Fq} 1 \right)^l  = - \left ( \frac{V_1}{W_1} \right)^l.
\]

Assume that the result holds for $n-1$. Until the end of the proof, let $u_1, \ldots, u_n$ be elements of $\Fq$. 

First, assume that $V_n \neq 0$. Set $V_i' := \frac{V_i}{V_n}, W_i' := \frac{W_i}{W_n}$. We have
\[
\begin{aligned}
	&\sideset{}{'} \sum_{u_1, \ldots, u_n} \left ( \frac{u_1 V_1 + \cdots + u_n V_n}{u_1 W_1 + \cdots + u_n W_n} \right)^l  \\
	& =  \sideset{}{'} \sum_{u_1, \ldots, u_{n-1}} \sum_{u_n} \left ( \frac{u_1 V_1 + \cdots + u_n V_n}{u_1 W_1 + \cdots + u_n W_n} \right)^l + \sideset{}{'} \sum_{u_n} \left ( \frac{u_n V_n}{u_n W_n} \right)^l \\
	& = \left( \frac{V_n}{W_n} \right)^l \left (  -1 + \sideset{}{'} \sum_{u_1, \ldots, u_{n-1}} \sum_{u_n} \left ( \frac{u_1 V_1' + \cdots + u_n}{u_1 W_1' + \cdots + u_n} \right)^l  \right), & \\
	& = \left( \frac{V_n}{W_n} \right)^l  \sideset{}{'} \sum_{u_1, \ldots, u_{n-1}}  \left ( \sum_{u_n} \left ( \frac{u_1 V_1' + \cdots + u_n}{u_1 W_1' + \cdots + u_n} \right)^l  + 1 \right)  \\
	& = \left( \frac{V_n}{W_n} \right)^l  \sideset{}{'} \sum_{u_1, \ldots, u_{n-1}}  \left ( \sum_{u_n} \left ( \frac{u_1 V_1' + \cdots + u_n}{u_1 W_1' + \cdots + u_n} \right) + 1 \right)^l ,  & \text{by Lemma~\ref{lemma2}}\\
	& = \left( \frac{V_n}{W_n} \right)^l \sideset{}{'} \sum_{u_1, \ldots, u_{n-1}} \left ( \frac{u_1 V_1'' + \cdots + u_{n-1} V_{n-1}''}{u_1 W_1'' + \cdots + u_{n-1} W_{n-1}''} \right)^l, & \text{by Lemma~\ref{lemma1}} \\
\end{aligned}
\]
where $V_i'' = (W_i')^q - V_i', W_i'' = (W_i')^q - W_i'$. Note that the linear independence over $\Fq$ of $W_i''$ for $1\leq i\leq n-1$, is equivalent to the linear independence over $\Fq$ of $W_j$ for $1\leq j\leq n$. By the induction hypothesis the last expression is equal to
\[
	\begin{aligned}
	 (-1)^{l+1} \left( \frac{V_n}{W_n} \right)^l  \left ( \sideset{}{'} \sum_{u_1, \ldots, u_{n-1}} \frac{u_1 V_1'' + \cdots + u_{n-1} V_{n-1}''}{u_1 W_1'' + \cdots + u_{n-1} W_{n-1}''} \right)^l .
	\end{aligned}
\]
On the other hand, 
\[
	\begin{aligned}
	\left ( \sideset{}{'} \sum_{u_1, \ldots, u_n} \frac{u_1 V_1 + \cdots + u_n V_n}{u_1 W_1 + \cdots + u_n W_n} \right)^l & = \left (\frac{V_n}{W_n} \right)^l \left ( \sideset{}{'} \sum_{u_1, \ldots, u_n} \frac{u_1 V_1' + \cdots + u_n }{u_1 W_1' + \cdots + u_n} \right)^l \\
	& =  \left( \frac{V_n}{W_n} \right)^l  \left ( \sideset{}{'} \sum_{u_1, \ldots, u_{n-1}} \frac{u_1 V_1'' + \cdots + u_{n-1} V_{n-1}''}{u_1 W_1'' + \cdots + u_{n-1} W_{n-1}''} \right)^l ,
	\end{aligned}
\]
where the last equality follows from Lemma~\ref{lemma1}.

Finally, assume that $V_n = 0$. We compute
\[
\begin{aligned}
	\sideset{}{'} \sum_{u_1, \ldots, u_n} \left ( \frac{u_1 V_1 + \cdots + u_n V_n}{u_1 W_1 + \cdots + u_n W_n} \right)^l  &  =  \sideset{}{'} \sum_{u_1, \ldots, u_{n-1}} \sum_{u_n} \left ( \frac{u_1 V_1 + \cdots + u_{n-1} V_{n-1}}{u_1 W_1 + \cdots + u_{n-1} W_{n-1} + u_n W_n} \right)^l \\
	& = W_n^{-l}   \sideset{}{'} \sum_{u_1, \ldots, u_{n-1}} \sum_{u_n} \left ( \frac{u_1 V_1 + \cdots + u_{n-1} V_{n-1}}{u_1 W_1' + \cdots + u_{n-1} W_{n-1}' + u_n} \right)^l \\
	& = W_n^{-l}  \sideset{}{'} \sum_{u_1, \ldots, u_{n-1}}  \left ( \sum_{u_n}  \frac{u_1 V_1 + \cdots + u_{n-1} V_{n-1}}{u_1 W_1' + \cdots + u_{n-1} W_{n-1}' + u_n} \right)^l \\
	& = (-1)^l W_n^{-l}  \sideset{}{'} \sum_{u_1, \ldots, u_{n-1}} \left( \frac{u_1 V_1 + \cdots + u_{n-1} V_{n-1}}{u_1 W_1'' + \cdots + u_{n-1} W_{n-1}'' } \right)^l
\end{aligned}
\]
The second to last equality is the equality of Goss polynomials
$
	\sum_{u \in \Fq} \left ( \frac{1}{Y + u} \right)^l = \left ( \sum_{u \in \Fq} \frac{1}{Y + u} \right)^l.
$
On the other hand,
\[
\begin{aligned}
	& (-1)^{l+1}	\left ( \sideset{}{'} \sum_{u_1, \ldots, u_n} \frac{u_1 V_1 + \cdots + u_{n-1} V_{n-1}}{u_1 W_1 + \cdots + u_{n-1} W_{n-1} + u_n W_n} \right)^l  \\
	& = (-1)^{l+1} W_n^{-l} \left ( \sideset{}{'} \sum_{u_1, \ldots, u_{n-1}} \sum_{u_n} \frac{u_1 V_1 + \cdots + u_{n-1} V_{n-1}}{u_1 W_1' + \cdots + u_{n-1} W_{n-1}' + u_n} \right)^l  \\
	& = (-1)^{2l+1} W_n^{-l} \left ( \sideset{}{'} \sum_{u_1, \ldots, u_{n-1}} \frac{u_1 V_1 + \cdots + u_{n-1} V_{n-1}}{u_1 W_1'' + \cdots + u_{n-1} W_{n-1}''} \right)^l  .
\end{aligned}
\]
Thus the result for $n$ follows from the result for $n-1$, completing the proof.
\end{proof}

Lemma \ref{lemma3} gives a new proof of the following relations between some values of Pellarin $L$-functions, which are special cases of Theorem~1.3 in \cite{Perk}.

\begin{corollary} \label{Lvals}
Let $1 \leq l \leq q$. We have
\[
	L(\chi_t^l, l) = L(\chi_t, 1)^l.
\]
\end{corollary}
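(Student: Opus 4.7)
The plan is to reduce the identity directly to Lemma~\ref{lemma3} via a bijection between tuples in $\Fq^N$ and polynomials of bounded degree in $A$, followed by a limit as $N \to \infty$.

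First, for each $N \geq 1$, I would apply Lemma~\ref{lemma3} with $n = N$, $V_i = t^{i-1}$, and $W_i = \theta^{i-1}$; the hypothesis on the $W_i$ is satisfied because $1, \theta, \ldots, \theta^{N-1}$ are $\Fq$-linearly independent (as $\theta$ is transcendental over $\Fq$). This yields an identity between a primed sum of $l$-th powers over $\Fq^N$ and the $l$-th power of the corresponding primed linear sum, scaled by $(-1)^{l+1}$.

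The key dictionary step is then to translate these primed sums over tuples into sums over monic polynomials. The map $(u_0, \ldots, u_{N-1}) \mapsto \sum_{i=0}^{N-1} u_i \theta^i$ is a bijection from $\Fq^N \setminus \{0\}$ onto the set of non-zero polynomials of degree $<N$, and it carries $\sum u_i t^i$ to $\chi_t(a)$. Each monic polynomial of degree $<N$ arises from exactly $q-1$ tuples, namely its $\Fq^\times$-multiples, and by $\Fq$-linearity of $\chi_t$ the scalar $\lambda \in \Fq^\times$ cancels in $\chi_t(\lambda a)^j/(\lambda a)^j = \chi_t(a)^j/a^j$ for every $j \geq 0$. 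Since $q-1 \equiv -1 \pmod{p}$, each primed sum appearing in Lemma~\ref{lemma3} equals $-1$ times the analogous sum over $\{a \in A_+ : \deg a < N\}$. Combining the two factors of $-1$ with the $(-1)^{l+1}$ supplied by the lemma, the net sign is $+1$, and I would obtain the truncated identity
\begin{equation*}
\sum_{\substack{a \in A_+ \\ \deg a < N}} \left(\frac{\chi_t(a)}{a}\right)^l = \left(\sum_{\substack{a \in A_+ \\ \deg a < N}} \frac{\chi_t(a)}{a}\right)^l.
\end{equation*}

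Finally, I would pass to the limit $N \to \infty$, using that the Pellarin $L$-series $L(\chi_t^j, j)$ converges for $|t|<q$ at $j=1$ and $j=l$; this immediately gives $L(\chi_t^l, l) = L(\chi_t, 1)^l$. The one place where care is required is the sign bookkeeping described above; otherwise the argument is essentially a direct application of Lemma~\ref{lemma3} together with basic convergence of the partial sums.
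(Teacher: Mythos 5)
Your proposal is correct and follows essentially the same route as the paper: apply Lemma~\ref{lemma3} with $V_i = t^{i-1}$, $W_i = \theta^{i-1}$, convert the primed sums over nonzero polynomials of degree $<N$ into sums over monic polynomials via the $\Fq^\times$-action (picking up the factors of $q-1\equiv -1$), and pass to the limit. Your sign bookkeeping is exactly the ``short calculation'' the paper leaves implicit, and it checks out: the left side acquires one $-1$ while the right side acquires $(-1)^{l+1}\cdot(-1)^l=-1$, so the truncated identity over $A_+$ holds with no residual sign.
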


\begin{proof}
Let $A(n) = \{ a=\sum_{i=0}^{n-1}a_i\theta^i$: $a_i \in \Fq \}$. Applying Lemma \ref{lemma3} with $W_i=\theta^{i-1}$ and $V_i=t^{i-1}$ we see that
\[
\sideset{}{'} \sum_{a \in A(n)} \frac{\over{a}^l}{a^l} =  \left ( \sideset{}{'} \sum_{a \in A(n)} \frac{\over{a}}{a} \right)^l.
\]
But we have
\[
	- L(\chi_t^l, l) = \sideset{}{'} \sum_{a \in A} \frac{\over{a}^l}{a^l}  = \lim_{n \to \infty} \sideset{}{'} \sum_{a \in A(n)} \frac{\over{a}^l}{a^l},
\]
and a short calculation gives the result.
\end{proof}

We are ready to complete the
{proof of Theorem~\ref{mainthm}:} 
\begin{proof}
As in the proof of Theorem~\ref{Lvals}, let $A(n) = \{ a \in A: \deg (a) < n \}$. Applying Lemma~\ref{lemma3} with 
\[
V_i=\begin{cases}
d_1t^{i-1} & \textrm{ if } 1\leq i \leq n,\\
d_2t^{i-n-1}& \textrm{ if } n+1\leq i\leq 2n, 
\end{cases}
\]
and 
\[
W_i=\begin{cases}
z\theta^{i-1} & \textrm{ if } 1\leq i \leq n,\\
\theta^{i-n-1}& \textrm{ if } n+1\leq i\leq 2n, 
\end{cases}
\]
we have
\begin{equation} \label{star_star}
	\sideset{}{'} \sum_{c, d \in A(n)} \left( \frac{ \over{c} d_1 + \over{d} d_2}{(c z + d)^{q^k}} \right)^l = (-1)^{l
	+1} \left ( \sideset{}{'} \sum_{c, d \in A(n)} \frac{\over{c} d_1 + \over{d} d_2}{(c z + d)^{q^k}} \right)^l.
\end{equation}
But
\[
	\GG_{l, k} = \frac{1}{\tau^k L(\chi_t^l, l)}  \lim_{n \to \infty}  \sideset{}{'} \sum_{c, d \in A(n)} \left( \frac{ \over{c} d_1 + \over{d} d_2}{(c z + d)^{q^k}} \right)^l.
\]
By Theorem \ref{Lvals} we have $L(\chi_t^l, l) = L(\chi_t, 1)^l,$
combining this with \eqref{star_star}, we see that
\[
	\GG_{l, k} = (-1)^{l + 1} \GG_{1, k}^l.
\]
\end{proof}

\section{Consequences of Theorem~\ref{mainthm}} \label{sec5}

\begin{theorem} \label{sum_c_d_to_product}
Let $1 \leq l \leq q$, $0 \leq j \leq l$. For $(z, t) \in \Omega \times B_q$ we have
\[
	\sum_{c \in A_+} \sum_{d \in A} \frac{ \over{c}^{l-j} \over{d}^j}{(cz + d)^l} = \left ( \sum_{c \in A_+} \sum_{d \in A} \frac{\over{c}}{cz + d} \right)^{l-j} \left ( \sum_{c \in A_+} \sum_{d \in A} \frac{\over{d}}{cz + d} \right)^j.
\]
\end{theorem}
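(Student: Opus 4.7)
The strategy is to mirror the proof of Theorem~\ref{mainthm}, but to replace $d_1$ and $d_2$ by formal indeterminates $X$ and $Y$, thereby turning the scalar identity into a polynomial identity in $\CCc((t))[X, Y]$ from which the case-by-case statement (indexed by $j$) can be read off.

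First, I would apply Lemma~\ref{lemma3} with $V_i = X t^{i-1},\ W_i = z \theta^{i-1}$ for $1 \leq i \leq n$ and $V_i = Y t^{i-n-1},\ W_i = \theta^{i-n-1}$ for $n+1 \leq i \leq 2n$. The $W_i$'s are $\Fq$-linearly independent precisely because $z \in \Omega$, and Lemma~\ref{lemma3} is stated for arbitrary $V_i$, so the argument goes through with $X, Y$ treated as formal. At each finite level $n$ this gives the identity in $\CCc((t))[X, Y]$
\[
\sideset{}{'}\sum_{c, d \in A(n)} \frac{\bigl(X\chi_t(c) + Y\chi_t(d)\bigr)^l}{(cz+d)^l}
= (-1)^{l+1}\!\left(\sideset{}{'}\sum_{c, d \in A(n)} \frac{X\chi_t(c) + Y\chi_t(d)}{cz+d}\right)^{\!l}.
\]

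Next I would convert the primed sums to sums over $c \in A_+,\ d \in A$ via the $\Fq^*$-scaling bijection $(\alpha, c', d') \mapsto (\alpha c', \alpha d')$ from $\Fq^* \times A_+(n) \times A(n)$ onto $\{(c,d) \in A(n)^2 : c \neq 0\}$, under which the summand is $\alpha$-invariant (this is the same device used in the proofs of Corollary~\ref{Lvals} and Theorem~\ref{mainthm}); summing over $\alpha$ produces the factor $q-1 \equiv -1 \pmod{p}$. The remaining $c = 0,\ d \neq 0$ contribution evaluates to $-Y^l\, L(\chi_t^l, l)$, which by Corollary~\ref{Lvals} equals $-Y^l\, L(\chi_t,1)^l$.

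Then I would take $n \to \infty$: the convergence of each coefficient of $X^{l-j} Y^j$ is guaranteed by the estimates in the proof of Proposition~\ref{convergence_of_E}. After simplification, the identity should read
\[
\sum_{c \in A_+}\sum_{d \in A} \frac{\bigl(X\chi_t(c) + Y\chi_t(d)\bigr)^l}{(cz+d)^l} = (X E_u + Y E_d)^l
\]
as polynomials in $X, Y$, where $E_u, E_d$ denote the two sums on the right-hand side of the theorem. Since the monomials $X^{l-j}Y^j$ form a $\CCc((t))$-basis of the polynomial ring, comparing coefficients immediately yields the claimed formula: the factor $\binom{l}{j}$ appears on both sides and so drops out, which is why the identity holds uniformly in $j$ even when $\binom{l}{j} \equiv 0 \pmod p$.

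\textbf{Main obstacle.} The delicate step is the bookkeeping that shows the right-hand side really collapses to $(XE_u + YE_d)^l$. One must keep track of the sign $(-1)^{l+1}$ from Lemma~\ref{lemma3}, the factor $(q-1) \equiv -1$ from the $\Fq^*$-bijection, and the separate $-Y^l L(\chi_t,1)^l$ contribution from the $c = 0$ stratum, and verify that these combine into a clean $l$-th power. This combination is what makes Corollary~\ref{Lvals} (rather than just a convergence statement) essential: it is exactly the identity $L(\chi_t^l, l) = L(\chi_t, 1)^l$ that lets the boundary term fuse with the bulk into a single $l$-th power, in a manner completely analogous to how Lemma~\ref{lemma2} underpins the main induction in Lemma~\ref{lemma3}.
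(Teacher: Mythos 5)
Your mechanism for ``polarizing'' the $l$-th power identity is genuinely different from the paper's. The paper deduces the theorem in a few lines from Theorem~\ref{mainthm} together with the uniqueness of the vector $\Evec$ representing a $\tau$-recurrent sequence relative to the basis $d_1^{l}, d_1^{l-1}d_2, \ldots, d_2^l$ (Pellarin's Proposition~10): equating the components of $\Evec_l$ with those of the $l$-th symmetric power of $\Evec_1$ gives the $j$-indexed identities at once. You instead rerun Lemma~\ref{lemma3} with formal indeterminates $X,Y$ in place of $d_1,d_2$ and compare coefficients of $X^{l-j}Y^j$; this substitution is legitimate (Lemma~\ref{lemma3} permits arbitrary $V_i$ in any field containing $\Fq$, and your $W_i$ are $\Fq$-independent since $z\notin K$), and it has the advantage of not needing the $\Fq(t)$-linear independence of the $d_1^{l-j}d_2^{j}$.

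However, two steps do not go through as written. First, the coefficient of $X^{l-j}Y^j$ on \emph{each} side carries the factor $\binom{l}{j}$, and your claim that it ``drops out \ldots even when $\binom{l}{j}\equiv 0\pmod p$'' is a fallacy: from $\binom{l}{j}A=\binom{l}{j}B$ with $\binom{l}{j}=0$ in $\Fq$ (which does happen for $1<l<q$, e.g.\ $q=4$, $l=2$, $j=1$) one learns nothing about $A$ versus $B$. (The paper's component comparison carries the same binomial coefficients and is silent on this point, so you are not worse off --- but you cannot assert cancellation of a zero factor.) Second, the bookkeeping you flag as the ``main obstacle'' is not actually closed by Corollary~\ref{Lvals}. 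The $c=0$ stratum contributes $-YL(\chi_t,1)$ to the degree-one primed sum and $-Y^lL(\chi_t^l,l)$ to the degree-$l$ one, so after the $\Fq^{*}$-orbit count Lemma~\ref{lemma3} yields
\[
\sum_{c\in A_+}\sum_{d\in A}\frac{\bigl(X\chi_t(c)+Y\chi_t(d)\bigr)^l}{(cz+d)^l}
=\bigl(XE_u+Y(E_d+L(\chi_t,1))\bigr)^l-Y^lL(\chi_t,1)^l,
\]
and Corollary~\ref{Lvals} cancels only the pure $Y^l$ term; the mixed terms $\binom{l}{j}(XE_u+YE_d)^{l-j}\,Y^jL(\chi_t,1)^j$ for $1\le j\le l-1$ survive and must be disposed of before the theorem can be read off for $j\ge 1$. (The paper avoids confronting this by asserting $\sideset{}{'}\sum_{c,d}=-\sum_{c\in A_+}\sum_{d\in A}$ uniformly in $j$, which likewise ignores the $c=0$ stratum once $j\ge 1$.) Your argument does deliver the $j=0$ component cleanly, which is the case actually used for Corollary~\ref{improved_convergence} and Theorem~\ref{E^2}, but as written it does not establish the statement for $1\le j\le l$.
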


\begin{proof} By Theorem~\ref{mainthm}, if $1 \leq l \leq q$, then 
\[
	\GG_{l, k} = (-1)^{l+1} \GG_{1, k}^l, \qquad \qquad \forall k \in \ZZ.
\]
Since $\GG_{l, k} = (\tau^k \Evec) \cdot  \Phi_l$ for a unique vector $\Evec$ (see \cite[Proposition~10]{Pel_tau_recur_seq}) it follows that
\[
	\sideset{}{'} \sum_{c, d \in A} \frac{\over{c}^{l-j} \over{d}^j}{(cz + d)^l} = (-1)^{l+1}  \left ( \sideset{}{'} \sum_{c, d \in A} \frac{\over{c}}{cz + d} \right)^{l-j} \left ( \sideset{}{'} \sum_{c, d \in A} \frac{\over{d}}{cz + d} \right)^j.
\]
The proof is complete by observing that for $0 \leq j \leq l$:
\[
	\sideset{}{'} \sum_{c, d \in A} \frac{\over{c}^{l-j} \over{d}^j}{(cz + d)^l} = - \sum_{c \in A_+} \sum_{d \in A}  \frac{\over{c}^{l-j} \over{d}^j}{(cz + d)^l}.
\]
\end{proof}

\begin{remark}
Since $\GG_{l, k}$ is completely determined by $\Evec_l$ one sees that Theorem~\ref{sum_c_d_to_product}  is in fact equivalent to Theorem~\ref{mainthm}. 
\end{remark}

The result of Theorem~\ref{sum_c_d_to_product} when $j = 0$ actually holds for $|t| < q^q$ as long as $z$ is in the neighborhood of `infinity' $\Omega_1 = \{ z \in \Omega: |z|_i > 1 \}$.

\begin{corollary} \label{improved_convergence} Let $1 \leq l \leq q$. For $(z, t) \in \Omega_1 \times B_{q^q}$, we have
\[
	\sum_{c \in A_+} \sum_{d \in A} \frac{\over{c}^l}{(cz + d)^l} = \left ( \sum_{c \in A_+} \sum_{d \in A} \frac{\over{c}}{(cz + d)} \right ) ^l.
\] 
\end{corollary}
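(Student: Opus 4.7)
The plan is analytic continuation in the variable $t$, with $z\in\Omega_1$ held fixed. By Theorem~\ref{sum_c_d_to_product} (case $j=0$, using $\Omega_1\subset\Omega$), the asserted identity already holds on $\{z\}\times B_q$. It therefore suffices to show that both sides extend to rigid analytic functions of $t$ on the larger disk $B_{q^q}$; the identity principle for rigid analytic functions will then force agreement on $B_{q^q}$.

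To produce the extension, I would first rewrite the inner sums over $d$ using the classical Goss polynomials $G_l$ for the Carlitz lattice (see, e.g., \cite{Gek_88_paper}): $\sum_{d\in A}(cz+d)^{-l}=\carperiod^l G_l(u_c)$, which for $l=1$ specializes to $\sum_{d\in A}(cz+d)^{-1}=\carperiod u_c$. This converts the right-hand side into $\bigl(\carperiod\sum_{c\in A_+}\over{c}\,u_c\bigr)^l$ and the left-hand side into $\carperiod^l\sum_{c\in A_+}\over{c}^{\,l}\,G_l(u_c)$. Since $G_l$ is a polynomial vanishing at $0$, for $|u_c|\leq 1$ we have $|G_l(u_c)|\leq M_l|u_c|$ with $M_l$ a constant depending only on $l$. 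Combining this with $|u|<1$ on $\Omega_1$ and the key estimate \eqref{uc}, $|u_c|\leq |u|^{q^{\deg c}}$, the general term of either series is bounded by a constant times $|t|^{l\deg c}\,|u|^{q^{\deg c}}$. The doubly-exponential factor $|u|^{q^{\deg c}}$ overwhelms any exponential-in-$\deg c$ growth from $|t|^{l\deg c}$ for every $t\in\CCc$, so both series converge absolutely and uniformly on compacta in $t$, yielding rigid analytic functions on all of $\CCc\supset B_{q^q}$.

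The main obstacle is precisely the improved bound on $\sum_{d\in A}(cz+d)^{-l}$ coming from the Goss polynomial identity: the naive non-archimedean estimate $|\sum_{d\in A}(cz+d)^{-l}|\leq 1/|cz|_i^l$ only produces convergence for $|t|<q$ (just as in the proof of Theorem~\ref{sum_c_d_to_product}), while the tighter bound in terms of $u_c$ is what unlocks the doubly-exponential decay and pushes the convergence radius past $q$. Once this input is in place, verifying that the decay is uniform in $t$ on compacta of $B_{q^q}$ is routine, and the identity principle closes the proof. In fact the same argument establishes the identity on $\Omega_1\times\CCc$, with $B_{q^q}$ being a natural cutoff consistent with the convergence regions used elsewhere in the paper.
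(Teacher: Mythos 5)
Your proposal is correct and follows essentially the same route as the paper: both arguments convert $\sum_{d\in A}(cz+d)^{-l}$ into (Goss polynomials of) $u_c$, use the resulting doubly-exponential decay $|\carperiod u_c|\leq q^{-|c|}$ to beat the growth of $\over{c}^l$ and get convergence on the larger $t$-disk, and then extend the identity from $B_q$ (Theorem~\ref{sum_c_d_to_product}) by rigid-analytic continuation. The only cosmetic difference is that you bound $|G_l(u_c)|$ by a constant times $|u_c|$ while the paper uses that $G_l(X)=X^l$ for $1\leq l\leq q$, and you should quote Gekeler's estimate \cite[(5.5)]{Gek_88_paper} rather than deducing $|u_c|\leq|u|^{q^{\deg c}}$ from \eqref{uc} alone, since the higher-order coefficients there need not have absolute value at most $1$.
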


\begin{proof} According to \cite[(5.5)]{Gek_88_paper} and \eqref{uc} for $z \in \Omega_1$
\[
	| \carperiod u(cz)| = \left | \sum_{d \in A} \frac{1}{cz + d} \right| \leq q^{-|c|}.
\]
If, in addition, $|t| < q^q$, then $|\chi_t (c)| < q^{|c|}$ and therefore the series 
\[
\carperiod \sum_{c \in A_+} \over{c} u(c z) = \sum_{c \in A_+} \sum_{d \in A} \frac{\over{c}}{(cz + d)}
\]
converges. The same estimates and properties of Goss polynomials show that for $(z, t) \in \Omega_1 \times B_{q^q}$ the series
\[
	\carperiod^l  \sum_{c \in A_+} \over{c}^l u (c z)^l = \sum_{c \in A_+} \sum_{d \in A} \frac{\over{c}^l}{(cz + d)^l}
\]
converges. By Theorem \ref{sum_c_d_to_product} we know that for $|t| < q$, $1 \leq l \leq q$,
\[
	\sum_{c \in A_+} \sum_{d \in A} \frac{\over{c}^l}{ (cz + d)^l} = \left ( \sum_{c \in A_+} \sum_{d \in A} \frac{\over{c}}{cz + d} \right)^l,
\]
and so by analytic continuation this equality extends to $|t| < q^q$ provided that $z \in \Omega_1$.
\end{proof}

Recall that we have defined $\EE$ as $h d_2^{(1)}$. Corollary~5 from \cite{Pel_tau_recur_seq} shows that for $|t| < q^q$ we have the following series expansion
\[
	\EE = \sum_{c \in \AA_+} \over{c} u_c,
\]
where $u_c: \Omega \to \CCc$ is the function $u_c := e_{\carperiod A} (\carperiod c z)^{-1}$. 

As a special case of  Theorem \ref{sum_c_d_to_product} we obtain the following generalization, which was first conjectured in \cite[Remark~3.7]{Pet}.

\begin{theorem} \label{E^2} Let $1 \leq l \leq q$. For $(z, t) \in \Omega_1 \times B_{q^q}$, we have
\[
	\EE^l = \sum_{c \in \AA_+} \over{c}^l u_c^l.
\]
\end{theorem}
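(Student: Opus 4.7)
The plan is to deduce Theorem~\ref{E^2} from the $j=0$ case of Corollary~\ref{improved_convergence} by converting both Eisenstein-type $(c,d)$-double sums into $u_c$-expansions through the Goss polynomials attached to the Carlitz lattice. Concretely, I would start from the identity
\[
\sum_{c \in \AA_+} \sum_{d \in \AA} \frac{\over{c}^l}{(cz+d)^l} = \Bigl( \sum_{c \in \AA_+} \sum_{d \in \AA} \frac{\over{c}}{cz+d} \Bigr)^l
\]
of Corollary~\ref{improved_convergence}, valid on $\Omega_1 \times B_{q^q}$, and replace each inner $d$-sum by its closed form in $u_c$.

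The crucial analytic input is Gekeler's identification of the $l$-th Goss polynomial $G_l$ of the Carlitz lattice: the recursion defining $G_l$ degenerates to $G_l(X) = X \cdot G_{l-1}(X)$ so long as $l \leq q$, from which one concludes $G_l(X) = X^l$ throughout that range. Combined with the standard expansion $\sum_{d \in \AA}(cz+d)^{-l} = \carperiod^l G_l(u_c)$ valid for $c \in \AA_+$, this yields
\[
\sum_{d \in \AA} \frac{1}{(cz+d)^l} = \carperiod^l u_c^l \ \text{ for } 1 \leq l \leq q, \quad \text{and} \quad \sum_{d \in \AA} \frac{1}{cz+d} = \carperiod u_c,
\]
the latter of which was already used in the proof of Corollary~\ref{improved_convergence}.

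Substituting these two evaluations into the Corollary's identity and invoking Pellarin's $u$-expansion $\EE = \sum_{c \in \AA_+} \over{c} u_c$ recalled just before the theorem, the left-hand side becomes $\carperiod^l \sum_{c \in \AA_+} \over{c}^l u_c^l$ and the right-hand side becomes $\bigl(\carperiod \sum_{c \in \AA_+}\over{c} u_c\bigr)^l = \carperiod^l \EE^l$; cancelling $\carperiod^l$ yields the theorem. The only supplementary check is the convergence of the $u_c$-series on the specified domain, but the bound $|\carperiod u_c| \leq q^{-|c|}$ from \cite[(5.5)]{Gek_88_paper}, combined with $|\over{c}| \leq |t|^{\deg_\theta c}$, shows that for $|t| < q^q$ and $1 \leq l \leq q$ the general term $\over{c}^l u_c^l$ decays doubly exponentially in $\deg_\theta c$. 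There is no substantive obstacle here beyond recording the $G_l(X) = X^l$ fact for $l \leq q$, which is a short computation with the Goss-polynomial recursion.
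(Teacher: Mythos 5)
Your proposal is correct and follows essentially the same route as the paper: the paper's proof of Theorem~\ref{E^2} is simply ``this follows immediately from Corollary~\ref{improved_convergence},'' with the two series identities $\sum_{d}(cz+d)^{-l}=\carperiod^l u_c^l$ (via $G_l(X)=X^l$ for $l\leq q$) and $\EE=\sum_{c\in A_+}\over{c}u_c$ already recorded in the proof of that corollary and in the surrounding text. You have merely made explicit the substitutions and the convergence check that the paper leaves implicit.
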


\begin{proof} This follows immediately from Corollary \ref{improved_convergence}.
\end{proof}

\begin{remark} The range $1 \leq l \leq q$ is natural because of properties of Goss polynomials, since in this range the $l$-th Goss polynomial is just $X^l$ (see \cite[(3.4)]{Gek_88_paper}). It is not difficult to find counterexamples to possible extensions of Theorem~\ref{E^2} if we go beyond $l = q$. 
\end{remark}

\begin{remark}
Theorem~\ref{E^2} also provides examples of deformations of Drinfeld modular forms with $A$-expansions. That is expansions of the form
\[
	\sum_{c \in A_+} a_c (t) G_n (u_c),
\]
where $a_c (t) \in A[t] = \Fq[\theta, t]$, $G_n$ is the $n$-th Goss polynomial of the lattice $\carperiod A$ as defined in \cite[Proposition~2.17]{Gos_eisenstein}. It is natural to wonder if there are more examples of these, just as in the case of Drinfeld modular forms (see \cite[Theorem~1.3]{Pet}). 

Computations with SAGE \cite{SAGE} suggest that this is indeed the case. For example, our computations suggest that for
\[	
	f_s = \sum_{c \in A_+} c^{1 + s(q-1)} u_c \in M_{2 + s (q-1), 1},
\]
we have
\[
	\mathbf{f}_s := f_s d_2 = \sum_{c \in A_+} \over{c} c^{s (q-1)} u_c,
\]
for $s = 1, \ldots, q, q+2, q+3, \ldots, q^2$. We hope to return to this topic in future work.
\end{remark}

Next, we turn to applications of Theorem \ref{E^2} to Drinfeld modular forms with $\AA$-expansions (see \cite{Lopez}, \cite{Pet}). We assume throughout that $1 \leq l \leq q$. For $\nu \in \NN$, define
\[
	f_{l, \nu} := \sum_{c \in \AA_+} c^{l q^\nu} u_c^l \in M_{l q^\nu + l, l}.
\]
We will use Theorem \ref{E^2} to give a recursive formula for $f_{l, \nu}$.

\begin{theorem} \label{recursive}
We have $f_{l, 1} = h^l$, $f_{l, 2} =  h^l g^{l q}$ and the recursive formula for $\nu \geq 2$
\[
	f_{l, \nu} = \left ( \frac{g^q}{h^{q-1}} f_{2, \nu-1}^q - \frac{[\nu -2]^{q^2}}{h^{q-1}} f_{2, \nu-2}^{q^2} \right)^l.
\]
\end{theorem}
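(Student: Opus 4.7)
My plan is to exploit Theorem~\ref{E^2} to reduce the statement to a recursion for $f_{1,\nu}$, which I will extract from the $\tau$-difference equation \eqref{tau_eq_ds} for $d_2$. Since $c(\theta^{q^\nu}) = c^{q^\nu}$ for $c \in \Fq[\theta]$, Theorem~\ref{E^2} yields
\[
	\EE^l\big|_{t=\theta^{q^\nu}} = \sum_{c \in A_+} c^{l q^\nu} u_c^l = f_{l,\nu}
\]
for $1 \le l \le q$.  Because evaluation at $t=\theta^{q^\nu}$ is a ring homomorphism, this forces $f_{l,\nu} = f_{1,\nu}^l$, so it suffices to establish the scalar recursion for $f_{1,\nu}$ and then raise it to the $l$-th power.

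To obtain the recursion for $f_{1,\nu} = \EE|_{t=\theta^{q^\nu}}$, I rewrite \eqref{tau_eq_ds} as
\[
	d_2 = g\, d_2^{(1)} - h^{q-1}(t-\theta^q)\, d_2^{(2)},
\]
apply $\tau$ to get $d_2^{(1)} = g^q d_2^{(2)} - h^{q^2-q}(t-\theta^{q^2}) d_2^{(3)}$, and then substitute $d_2^{(k+1)} = -\EE^{(k)}/h^{q^k}$ (from $\EE = -h d_2^{(1)}$) to clear all $d_2$'s.  The signs cancel, leaving the clean identity
\[
	h^{q-1}\,\EE = g^q\,\EE^{(1)} - (t-\theta^{q^2})\,\EE^{(2)}
\]
in $\mathcal{R}_\infty$.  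Because $\tau$ fixes $t$ while acting as the $q$-th power on $\CCc$, the expansion $\EE = \sum_c \chi_t(c)\, u_c$ implies $\EE^{(k)} = \sum_c \chi_t(c)\, u_c^{q^k}$; evaluating at $t=\theta^{q^\nu}$ and pulling out the $q^k$-th power gives
\[
	\EE^{(k)}\big|_{t=\theta^{q^\nu}} \;=\; \sum_{c \in A_+} c^{q^\nu}\, u_c^{q^k} \;=\; \Bigl(\sum_{c \in A_+} c^{q^{\nu-k}} u_c\Bigr)^{q^k} \;=\; f_{1,\nu-k}^{q^k}.
\]
Combining this with $\theta^{q^\nu} - \theta^{q^2} = [\nu-2]^{q^2}$ and evaluating the displayed $\tau$-identity at $t = \theta^{q^\nu}$ yields $h^{q-1} f_{1,\nu} = g^q f_{1,\nu-1}^q - [\nu-2]^{q^2} f_{1,\nu-2}^{q^2}$, and taking $l$-th powers delivers the claimed recursion.

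The base cases drop out of the same framework: $\EE|_{t=\theta^q} = \sum_c c^q u_c = h$, so $f_{l,1}= h^l$; and the recursion specialized at $\nu=2$ collapses because $[0]=0$, giving $h^{q-1} f_{1,2} = g^q h^q$, hence $f_{1,2} = g^q h$ and $f_{l,2} = h^l g^{lq}$.  I expect the one step that requires care to be the Frobenius-pullout identity $\EE^{(k)}|_{t=\theta^{q^\nu}} = f_{1,\nu-k}^{q^k}$, since it hinges on the specific definition of $\tau$ on $\CCc((t))$ (trivial on $t$, $q$-th power on $\CCc$) and on the fact that $\EE$ has an $A$-expansion in $u_c$ with coefficients in $\Fq[\theta,t]$; once that bookkeeping is in place, the remaining manipulations are elementary.
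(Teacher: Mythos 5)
Your proof is essentially the paper's: both arguments rest on the $A$-expansion $\EE=\sum_{c\in A_+}\chi_t(c)\,u_c$ together with the order-two $\tau$-difference equation $h^{q-1}\EE=g^q\EE^{(1)}-(t-\theta^{q^2})\EE^{(2)}$, which the paper simply quotes from Pellarin and which you correctly rederive from \eqref{tau_eq_ds} and $\EE=-h\,d_2^{(1)}$. The one substantive difference is the specialization step: the paper applies $(\frob\circ\tau^{-1})^{\nu}$ (which fixes $\CCc$ and sends $t\mapsto t^q$) and then sets $t=\theta$, whereas you evaluate directly at $t=\theta^{q^{\nu}}$. Your route needs one extra remark to be airtight: $|\theta^{q^{\nu}}|=q^{q^{\nu}}\geq q^{q}$, so this point lies outside the disc $B_{q^q}$ on which Theorem~\ref{E^2} is stated, and you should note that for fixed $z\in\Omega_1$ both sides of that identity are entire in $t$ (the terms $\chi_t(c)^l u_c^l$ decay doubly exponentially in $\deg c$ for any fixed $t$, and $\EE\in\mathcal{R}_\infty$), so the identity persists at $t=\theta^{q^{\nu}}$ by rigid-analytic continuation. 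With that in place, your computation --- including $(t-\theta^{q^2})\big|_{t=\theta^{q^{\nu}}}=[\nu-2]^{q^2}$, the Frobenius pull-out $\EE^{(k)}\big|_{t=\theta^{q^{\nu}}}=f_{1,\nu-k}^{q^k}$, and the collapse of the $[0]$-term in the base case $\nu=2$ --- matches the paper's proof; note also that your recursion correctly features $f_{1,\nu-1}^{q}$ and $f_{1,\nu-2}^{q^2}$, and a weight count shows the $f_{2,\cdot}$ appearing in the printed statement must be a typo for $f_{1,\cdot}$.
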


\begin{proof} Let $\frob$ be the $q$th power Frobenius map acting on $\CCc((t))$. Then
\[
	\left ( (\frob \circ \tau^{-1})^{\nu} \EE^l \right)_{\mid_{t = \theta}} = f_{l, \nu}.
\]
Pellarin has shown (\cite[Proposition~9]{Pel_order_of_vanishing}) that
\[
	\EE = \frac{g^q}{h^{q-1}} \tau \EE - \frac{(u - \theta^{q^2})}{h^{q-1}} \tau^2 \EE.
\]
Therefore 
\[
	\EE^l = \left (  \frac{g^q}{h^{q-1}} \tau \EE - \frac{(t - \theta^{q^2})}{h^{q-1}} \tau^2 \EE \right)^l.
\]
Applying $(\frob \circ \tau^{-1})^\nu$ to both sides and plugging in $t = \theta$ finishes the proof.
\end{proof}

\begin{corollary} \label{application_to_A_exp} For $\nu \in \NN$, we have the eigenproduct identity of Drinfeld modular forms $f_{1, \nu}^l = f_{l, \nu}.$
\end{corollary}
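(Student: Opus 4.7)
The statement is essentially a repackaging of Theorem~\ref{E^2} once one applies the Frobenius-twisting specialization that already appears in the proof of Theorem~\ref{recursive}. The plan is as follows.

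First I would reuse the observation from the proof of Theorem~\ref{recursive}: the $A$-expansion $\EE^l = \sum_{c \in A_+} \chi_t(c)^l u_c^l$ supplied by Theorem~\ref{E^2} (together with Pellarin's analogous expansion $\EE = \sum_{c \in A_+}\chi_t(c) u_c$ for the case $l=1$) yields the identity
\[
f_{l,\nu} \;=\; \bigl((\frob \circ \tau^{-1})^\nu \EE^l\bigr)\Big|_{t=\theta}, \qquad \nu \in \NN.
\]
This is because $\frob \circ \tau^{-1}$ fixes $\CCc$ pointwise and sends $t \mapsto t^q$, so iterating $\nu$ times and then specializing $t=\theta$ converts each factor $\chi_t(c) = c(t)$ into $c(\theta^{q^\nu}) = c^{q^\nu}$, while leaving the $u_c^l$ factors alone.

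Next I would invoke the simple but crucial point that $\frob \circ \tau^{-1}$ is a ring endomorphism of $\RadR_\infty$, and that evaluation at $t=\theta$ is also a ring homomorphism. In particular, both operations commute with raising to the $l$-th power. Combining these two observations with the identity above gives
\[
f_{l,\nu} \;=\; \bigl((\frob \circ \tau^{-1})^\nu \EE^l\bigr)\Big|_{t=\theta} \;=\; \Bigl(\bigl((\frob \circ \tau^{-1})^\nu \EE\bigr)\Big|_{t=\theta}\Bigr)^{\!l} \;=\; f_{1,\nu}^{\,l},
\]
where the final equality uses the same identity applied to $l=1$.

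In this approach the genuine content is entirely packaged into Theorem~\ref{E^2}: once the $l$-th power of $\EE$ is known to admit an $A$-expansion, the corollary becomes a one-line formality about ring homomorphisms commuting with $l$-th powers. Consequently there is no significant obstacle beyond confirming that the Frobenius twist $\frob \circ \tau^{-1}$ is being interpreted consistently with the conventions of Section~\ref{sec5}, and that the restriction $1 \leq l \leq q$ (which is exactly the range in which Theorem~\ref{E^2} applies) is carried along throughout.
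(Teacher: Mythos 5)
Your proof is correct, and it is essentially the mechanism the paper itself relies on, though the two are packaged differently. The paper's own proof of this corollary is a single line citing \cite[Theorem~2.3]{Pet} to the effect that both $f_{1,\nu}$ and $f_{l,\nu}$ are eigenforms --- that citation only justifies the adjective ``eigenproduct'' and leaves the identity $f_{1,\nu}^l = f_{l,\nu}$ itself implicit in the preceding Theorem~\ref{recursive}, whose proof contains exactly your key identity $f_{l,\nu} = \bigl((\frob\circ\tau^{-1})^{\nu}\,\EE^l\bigr)\big|_{t=\theta}$. You make that step explicit: since $\frob\circ\tau^{-1}$ and evaluation at $t=\theta$ are ring homomorphisms, they commute with $l$-th powers, so the $l=1$ case of the identity immediately yields $f_{l,\nu}=f_{1,\nu}^l$. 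This is a cleaner and more self-contained derivation of the identity than the paper's written proof; the only thing it omits is the (cited, not proved) fact that both sides are Hecke eigenforms, which is part of what the corollary's statement advertises. One small point worth keeping in mind: Theorem~\ref{E^2} gives the $A$-expansion of $\EE^l$ only on $\Omega_1\times B_{q^q}$, so the resulting identity of $u$-expansions a priori holds near ``infinity''; since both sides are rigid-analytic modular forms determined by their $u$-expansions, this suffices, but it deserves a sentence.
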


\begin{proof} Indeed, both $f_{1, \nu}$ and $f_{l, \nu}$ are eigenforms according to \cite[Theorem~2.3]{Pet}.
\end{proof}


\bibliographystyle{amsplain}
\bibliography{references}


\end{document}